\documentclass[a4paper,fleqn]{cas-sc}

\usepackage[numbers]{natbib}
\usepackage{amsmath,amssymb,amsfonts,amsthm}
\usepackage[textsize=scriptsize]{todonotes}
\usepackage{cleveref}
\usepackage{algorithm}
\usepackage{algpseudocode}

\DeclareMathOperator{\diag}{diag}

\DeclareMathOperator{\tr}{tr}

\DeclareMathOperator{\vect}{vec}
\DeclareMathOperator{\orth}{orth}

\newcommand{\R}{\mathbb{R}}

\newcommand{\sH}{\mathcal{H}}
\newcommand{\norm}[1]{\left\lVert#1\right\rVert}
\newcommand{\abs}[1]{\left\lvert#1\right\rvert}

\newtheorem{theorem}{Theorem}[section]
\newtheorem{lemma}[theorem]{Lemma}
\newtheorem{proposition}[theorem]{Proposition}
\newtheorem{corollary}[theorem]{Corollary}
\newtheorem{definition}[theorem]{Definition}
\newtheorem{remark}[theorem]{Remark}

\begin{document}
\let\WriteBookmarks\relax
\def\floatpagepagefraction{1}
\def\textpagefraction{.001}
\shorttitle{Truncated $\mathcal{H}_2$ Optimal Model Reduction for BQO Systems}
\shortauthors{H. Faßbender et~al.}
%\begin{frontmatter}

\title [mode = title]{Truncated $\mathcal{H}_2$ Optimal Model Reduction for Bilinear Quadratic Output Systems}                      
% \tnotemark[1,2]

% \tnotetext[1]{This document is the results of the research
%    project funded by the National Science Foundation.}

% \tnotetext[2]{The second title footnote which is a longer text matter
%    to fill through the whole text width and overflow into
%    another line in the footnotes area of the first page.}

\author[1]{Heike Faßbender}[%type=editor,
                        %auid=000,bioid=1,
                        %role=Researcher,
                        orcid=0000-0002-0970-2821]
%\cormark[2]
%\fnmark[1]
\ead{h.fassbender@tu-braunschweig.de}
\ead[url]{https://www.tu-braunschweig.de/inum/personal/fassbender}

\credit{Writing - Original Draft, Conceptualization, Supervision} %Writing - Review\& Editing

\affiliation[1]{organization={Institute for Numerical Analysis, TU Braunschweig},
                addressline={Universitätsplatz 2}, 
                %citysep={},
                %postcode={38106},
                city={38106 Braunschweig},
              % citysep={}, % Uncomment if no comma needed between city and postcode
                state={Lower Saxony},
                country={Germany}}

\author[2]{Serkan Gugercin}[orcid=0000-0003-4564-5999]
%\cormark[1]
%\fnmark[1]
\ead{gugercin@vt.edu}
\ead[url]{https://gugercin.math.vt.edu}

\affiliation[2]{organization={Department of Mathematics and Division of Computational Modeling and Data Analytics, Academy of Data Science, Virginia Tech},
                addressline={225 Stanger Street, 460 McBryde Hall}, 
                %citysep={},                
                city={Blacksburg},
                postcode={24061},
              % citysep={}, % Uncomment if no comma needed between city and postcode
                state={Virginia},
                country={United States of America}}

\credit{Writing - Original Draft, Conceptualization, Methodology}
%Writing - Review\& Editing

\author[1]{Till Peters}[orcid=0009-0009-2852-6913]
\cormark[1]
\ead{till.peters@tu-braunschweig.de}
\ead[URL]{https://www.tu-braunschweig.de/inum/personal/till-peters}

\credit{Conceptualization, Methodology, Software, Investigation, Writing - Original Draft} %Writing - Review\& Editing

\cortext[cor1]{Corresponding author}
%\cortext[cor2]{Principal corresponding author}
% \fntext[fn1]{This is the first author footnote, but is common to third
%   author as well.}
% \fntext[fn2]{Another author footnote, this is a very long footnote and
%   it should be a really long footnote. But this footnote is not yet
%   sufficiently long enough to make two lines of footnote text.}

%\nonumnote{This note has no numbers. In this work we demonstrate $a_b$
  % the formation Y\_1 of a new type of polariton on the interface
  % between a cuprous oxide slab and a polystyrene micro-sphere placed
  % on the slab.}

\begin{abstract}
We develop an $\mathcal{H}_2$ model reduction framework for polynomial bilinear systems with quadratic outputs. We define the associated $\mathcal{H}_2$ norm and show its consistency with the established norm in the original (non-polynomial) BQO case. We derive first-order necessary conditions for $\mathcal{H}_2$ optimal model reduction of these polynomial systems and use them to construct an efficient iterative algorithm for computing reduced BQO models that approximately satisfy these conditions. Numerical results on two examples demonstrate the accuracy and effectiveness of the proposed reduction approach.
\end{abstract}

% \begin{graphicalabstract}
% \includegraphics{figs/cas-grabs.pdf}
% \end{graphicalabstract}

% \begin{highlights}
% \item Research highlights item 1
% \item Research highlights item 2
% \item Research highlights item 3
% \end{highlights}

\begin{keywords}
$\mathcal{H}_2$ model order reduction \sep Bilinear systems \sep Linear quadratic output systems \end{keywords}

\maketitle

\section{Introduction}
\label{sec:1}

We investigate the bilinear quadratic output (BQO) dynamical systems represented in the state-space form as
\begin{subequations}\label{eq:BQO}
    \begin{align}
    \dot{x}(t) &= Ax(t) + \sum_{k=1}^m N_k x(t) u_k(t)+ Bu(t),\, x(0)=0, \label{eq:BQO_a1}\\
    y(t)&= Cx(t)+\begin{bmatrix}
        x(t)^TM_1x(t) \\ \vdots \\ x(t)^TM_px(t)
    \end{bmatrix}, \label{eq:BQO_a2}
\end{align}
\end{subequations}
where $A\in\R^{n\times n}, B\in\R^{n \times m}, C\in \R^{p\times n}, N_k\in\R^{n\times n}$ for $k=1,\dots, m$, $M_j\in \R^{n\times n}$ for $j=1,\dots, p$, and $t\in T:=[0,\infty)$. To make the notation concise, we write $\mathcal{S}=(A,B,C,\mathcal{N},\mathcal{M})$ for the BQO system described in \eqref{eq:BQO} where we define
    \begin{align}
    \mathcal{N}:=\begin{bmatrix}
        N_1 \,  \dots \, N_m
    \end{bmatrix}\in \R^{n\times nm}, \quad 
    \mathcal{N}_T:=\begin{bmatrix}
        N_1^T \,  \dots \, N_m^T
    \end{bmatrix}\in \R^{n\times nm}, \quad
    \mathcal{M}:=\begin{bmatrix}
        M_1 \,  \dots \, M_p
    \end{bmatrix}\in \R^{n\times np}. \label{eq:mathcal}
\end{align} Here, $x(t)\in\R^n$ denotes  the state of the system, $u(t)\in \R^m$ denotes the input, and $y(t)\in \R^p$ denotes the output. We can assume without loss of generality that $M_j$ is symmetric for all $j=1,\dots, p$ due to $x(t)^TM_jx(t)=x(t)^TM_j^Tx(t)=\frac{1}{2}x(t)^T(M_j+M_j^T)x(t)$.

In principal, BQO systems can be considered as a combination of bilinear systems (see, e.g., \cite{al-baiyat_new_1993,zhang_2002}) and linear quadratic output (LQO) systems (see, e.g., \cite{benner_lqo_2022}). By setting $M_1=\dots=M_p=0$ in \eqref{eq:BQO_a2}, we recover bilinear systems, and similarly, setting $N_1=\dots=N_m=0$ in \eqref{eq:BQO_a1} yields the LQO system equations.

In most cases, such as those where the dynamical system arises from the spatial discretization of an underlying PDE, the dimension of the resulting BQO system is significantly large, and the task of model order reduction arises.
Thus, we seek a BQO system of same structure but with a significantly lower dimension that approximates the input-output behavior well in an appropriate norm. More precisely, we look for the reduced BQO system 
of order $r \ll n$ described as
\begin{subequations}\label{eq:BQOr}
    \begin{align}
    \dot{\widehat{x}}(t) &= \widehat{A}\widehat{x}(t) + \sum_{k=1}^m \widehat{N}_k \widehat{x}(t) u_k(t)+ \widehat{B}u(t), \, \widehat{x}(0)=0, \label{eq:BQOr_a1}\\
    \widehat{y}(t)&=\widehat{C}\widehat{x}(t)+\begin{bmatrix}
        \widehat{x}(t)^T\widehat{M}_1\widehat{x}(t) \\ \vdots \\ \widehat{x}(t)^T\widehat{M}_p\widehat{x}(t)
    \end{bmatrix}, \label{eq:BQOr_a2}
\end{align}
\end{subequations}
where $\widehat{A}\in\R^{r\times r}, \widehat{B}\in\R^{r \times m}, \widehat{C}\in \R^{p\times r}$, $\widehat{N}_k\in\R^{r\times r}$ for $k=1,\dots, m$, $\widehat{M}_j\in \R^{r\times r}$ for $j=1,\dots, p$, $t\in T$. Again, we write in shorthand $\widehat{\mathcal{S}}=(\widehat{A},\widehat{B},\widehat{C},\widehat{\mathcal{N}},\widehat{\mathcal{M}})$ for the BQO system $\widehat{\mathcal{S}}$, using $\widehat{\mathcal{N}}:=\begin{bmatrix}
        \widehat{N}_1 \,  \dots \, \widehat{N}_m
    \end{bmatrix}\in \R^{r\times rm}$,    $\widehat{\mathcal{N}}_T:=\begin{bmatrix}
        \widehat{N}_1^T \,  \dots \, \widehat{N}_m^T
    \end{bmatrix}\in \R^{r\times rm}$,
    $\widehat{\mathcal{M}}:=\begin{bmatrix}
        \widehat{M}_1 \,  \dots \, \widehat{M}_p
    \end{bmatrix}\in \R^{r\times rp}$. In \eqref{eq:BQOr}, $\widehat{x}(t)\in\R^r$ describes the reduced state and $\widehat{y}(t)\in \R^p$ the reduced output of the system. The goal is to construct \eqref{eq:BQOr} such that
the reduced output $\widehat{y}(t)$ provides a high-fidelity approximation to the original output $y(t)$.

 The most common way to generate such a reduced system is via Petrov-Galerkin projection. To this end, we assume that the state $x(t)$ lies approximately in a low dimensional subspace spanned by the columns of a matrix $V\in\R^{n\times r}$ such that $x(t)\approx V\widehat{x}(t)$ holds. Moreover, we pick
 $W\in\R^{n\times r}$ so that $W^TV$ is nonsingular. Then, \eqref{eq:BQO} leads  to \eqref{eq:BQOr} by inserting $x(t)\approx V\widehat{x}(t)$ and projecting on the left by $(W^TV)^{-1}W^T$. The reduced matrices are given as $\widehat{A}=(W^TV)^{-1}W^TAV$, $\widehat{B}=(W^TV)^{-1}W^TB$, $\widehat{C}=CV$, $\widehat{N}_k=(W^TV)^{-1}W^TN_kV$, and $\widehat{M}_j=V^TM_jV$ for all $k=1,\dots,m$ and $j=1,\dots,p$.

Finding good $V$ and $W$ for a reliable and high-fidelity projection is then the key in model reduction here. One method for obtaining suitable $V$ and $W$ is balanced truncation, which focuses on the concepts of reachability and observability and is studied in \cite{FasGP25,padhi_2024} for BQO systems. Another ansatz is the $\mathcal{H}_2$ optimal model reduction investigated in \cite{FasGP26} for BQO systems. Earlier work on $\sH_2$ optimal reduction is given by, e.g., \cite{ant_beattie_gug_2020,gugercin_2008,xu_optimal_2011} for linear systems, \cite{zhang_2002,benner_birka_2012,flagg_multipoint_2015,redmann_bilinear_2021} for bilinear systems, and also \cite{reiter_2024} for LQO systems. These publications play an important role as a basis for the studies here.

We present an approach that chooses $V$ and $W$ based on an $\mathcal{H}_2$ error analysis for polynomial BQO systems; see \Cref{sec:4}. This framework allows us to compute $V$ and $W$ via a finite sequence of standard Sylvester equations. In contrast to that, the theory for $\mathcal{H}_2$ optimal model reduction for BQO systems (not polynomial) requires solving generalized Sylvester equations; see \Cref{sec:H2nonpolynomial} or \cite{FasGP26}. Our numerical experiments show that the projection based on $\mathcal{H}_2$ theory for polynomial BQO systems yields high-fidelity approximants.

The main contributions of this work are:
\begin{itemize}
    \item We introduce polynomial BQO systems and their $\mathcal{H}_2$ norm, consistent with the $\mathcal{H}_2$ norm for BQO systems (\Cref{def:polyBQO}).
    \item We derive the first-order optimality conditions for $\mathcal{H}_2$ optimal model reduction for polynomial BQO systems (\Cref{thm:gradN}).
    \item We develop an algorithm for constructing reduced BQO systems that approximately satisfy these optimality conditions (\Cref{alg:T_BQO_TSIA}).
\end{itemize}

 The structure of the paper is given as follows: In \Cref{sec:2}, we briefly discuss preliminaries and two propositions for later derivations. 
  \Cref{sec:3} reviews the generalized Sylvester and Lyapunov equations, together with their solutions, that are used throughout the paper, and the $\mathcal{H}_2$ approximation problem of non-polynomial BQO systems in \Cref{sec:H2nonpolynomial}.
In \Cref{sec:4}, polynomial BQO systems are introduced, their $\mathcal{H}_2$ norm is defined, leading to an error expression for the $\mathcal{H}_2$ approximation of polynomial BQO systems. This error expression is then used to derive the first-order optimality conditions for $\mathcal{H}_2$ optimal model reduction of polynomial BQO systems. Then based on these optimality conditions,
\Cref{sec:Alg} develops  \Cref{alg:T_BQO_TSIA} to construct the corresponding reduced model. Finally, we present the numerical experiments in \Cref{sec:5} where we compare the new \Cref{alg:T_BQO_TSIA} with the existing $\mathcal{H}_2$ optimal methods for model reduction of BQO systems.

\section{Preliminaries}\label{sec:2}

Following the standard Householder notation, we label matrices with upper-case Roman letters ($A, B,$ etc.), vectors with lower-case Roman letters ($a, b,$ etc.), and scalars with lower-case Greek letters ($\alpha, \beta$, etc.). 
In the following, we will use the spectral norm $\norm{\cdot}=\norm{\cdot}_2$. 
Also, we denote a symmetric positive semidefinite matrix as $X \geq 0$, while we write $X > 0$ for a symmetric positive definite matrix $X$.
  Moreover, we call $A$ stable if all eigenvalues of $A$ lie in the open left half-plane. Then, there exist $\beta >0$ and $0<\alpha \leq -\max_i(Re(\lambda_i(A)))$ such that $\norm{e^{At}}\leq \beta e^{-\alpha t}$ for $t\geq 0$ \cite{adrianova_1995,willems70}. Next, we write $\tr(A)$ for the trace of a square matrix $A$ and $\vect(X)$ for the vectorization of a rectangular matrix $X$.
Furthermore, we will make use of the standard Kronecker product notation as defined in, e.g., \cite[Section 4.2,4.3]{HorJ94} or \cite[pp.27-28]{GolubVanLoan4th}.

We recall a lemma from \cite{mlinaric_structure-preserving} for simplifying the computation of gradients of error expressions that consist of traces. This will be essential for later use.

\begin{proposition}{\cite[Lemma 2.43]{mlinaric_structure-preserving}}\label[proposition]{lem:gradtr}
    Let $A\in\R^{n\times m}, B\in \R^{n\times n}, C \in \R^{m\times m}$, and $X\in\R^{n\times m}$. Then $\nabla_{X}\tr(AX^T)=A$ and $\nabla_{X}\tr(BXCX^T)=BXC+B^TXC^T$.    
\end{proposition}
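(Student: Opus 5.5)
The plan is to work entry-wise, using the Fréchet derivative with respect to the Frobenius inner product $\langle Y,Z\rangle=\tr(YZ^T)$. The gradient $\nabla_X f$ is characterized as the unique matrix $G\in\R^{n\times m}$ satisfying
\begin{equation*}
f(X+H)=f(X)+\tr(GH^T)+o(\norm{H})
\end{equation*}
for all $H\in\R^{n\times m}$. Since both functions are polynomial in $X$, it suffices to expand $f(X+H)$ and read off the term that is linear in $H$.

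For the first identity, $f(X)=\tr(AX^T)$ is linear in $X$. Hence $f(X+H)-f(X)=\tr(AH^T)$ exactly, and by the characterization $\nabla_X\tr(AX^T)=A$. Equivalently, $\tr(AX^T)=\sum_{i,j}a_{ij}x_{ij}$, so the partial derivative with respect to $x_{ij}$ is $a_{ij}$.

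For the second identity, set $f(X)=\tr(BXCX^T)$ and expand:
\begin{equation*}
f(X+H)=f(X)+\tr(BHCX^T)+\tr(BXCH^T)+\tr(BHCH^T).
\end{equation*}
The last term is $O(\norm{H}^2)$. The middle term $\tr(BXCH^T)$ is already in the required form $\tr(GH^T)$ with $G=BXC$.

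The remaining work is to rewrite $\tr(BHCX^T)$ in that same form. Using invariance of the trace under transposition and then cyclic permutation gives
\begin{equation*}
\tr(BHCX^T)=\tr(XC^TH^TB^T)=\tr(B^TXC^TH^T).
\end{equation*}
This contributes $B^TXC^T$ to the gradient. Adding the two linear pieces yields $\nabla_X f=BXC+B^TXC^T$. Dimensions are consistent throughout, since $B\in\R^{n\times n}$, $X\in\R^{n\times m}$ and $C\in\R^{m\times m}$.

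The only step that needs care is this rearrangement of $\tr(BHCX^T)$. If $B$ and $C$ are not symmetric, applying the cyclic property in the wrong order would produce an incorrect transpose pattern. Performing the transpose first and then the cyclic shift, as above, avoids this. As a check, one can compare with the index form $\partial f/\partial x_{kl}=\sum_{i,j}\bigl(b_{ik}c_{lj}x_{ij}+b_{ki}x_{ij}c_{jl}\bigr)$.
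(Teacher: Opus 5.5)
Your proof is correct and complete. The expansion of $\tr\bigl(B(X+H)C(X+H)^T\bigr)$, the transpose-then-cycle rewriting $\tr(BHCX^T)=\tr(B^TXC^TH^T)$, and your index check all give $BXC+B^TXC^T$ with respect to the pairing $\langle G,H\rangle=\tr(GH^T)$.

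The paper does not prove this statement itself; it imports it from \cite[Lemma~2.43]{mlinaric_structure-preserving}, so there is no in-paper argument to compare against. Your first-order expansion is the standard derivation. It also makes explicit the gradient convention the paper relies on in \Cref{lem:gradtrblock} and in the proof of \Cref{thm:gradN}.
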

Using \Cref{lem:gradtr}, we obtain the following result focusing on the derivatives of block matrices.
\begin{proposition}\label[proposition]{lem:gradtrblock}
    Let $G=G^T= \left[\begin{smallmatrix}
        G_1 & G_2 \\ G_2^T & G_4
    \end{smallmatrix}\right]$, $G_1\in\R^{n\times n}$, $G_2\in\R^{n\times r}$, $G_4\in\R^{r\times r}$ and $H=H^T= \left[ \begin{smallmatrix}
        H_1 & H_2 \\ H_2^T & H_4
    \end{smallmatrix}\right]$, $H_1\in\R^{n\times n}, H_2\in\R^{n\times r}, H_4\in\R^{r\times r}$. Then it holds
    \begin{align}
        \nabla_X \tr\left(\begin{bmatrix}
            Y & \\ & X
        \end{bmatrix} H\right) &= H_4, \label{eq:prop21}\\
        \nabla_X \tr\left(G\begin{bmatrix}
            Y & \\ & X
        \end{bmatrix} H \begin{bmatrix}
            Y^T & \\ & X^T
        \end{bmatrix}\right) &= 2G_2^TYH_2+2G_4XH_4, \label{eq:prop22}\\
        \nabla_W \tr \left( \begin{bmatrix}
            V & W
        \end{bmatrix} G \begin{bmatrix}
            V^T  \\ W^T
        \end{bmatrix} \right) &= 2VG_2+2WG_4,     \label{eq:prop23}   
    \end{align}
    for $Y\in\R^{m\times m}$, $X\in\R^{r\times r}$, $V\in\R^{p\times n}$, and $W\in\R^{p\times r}$.
\end{proposition}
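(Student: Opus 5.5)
Each of the three identities comes from multiplying out the block products, so that the trace becomes a sum of terms of the two forms covered by \Cref{lem:gradtr}. The gradient is then read off term by term. Terms not involving the differentiation variable are constants and drop out. Throughout, the symmetry $G=G^T$, $H=H^T$ is used: it gives $G_1^T=G_1$ and $G_4^T=G_4$ (likewise for $H$), and the off-diagonal blocks are $G_2$ and $G_2^T$. Implicitly $Y$ has the size of $G_1$ and $H_1$, so $m=n$.

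For \eqref{eq:prop21}, the block product gives $\tr\left(\left[\begin{smallmatrix} Y & \\ & X\end{smallmatrix}\right]H\right)=\tr(YH_1)+\tr(XH_4)$. Since $H_4$ is symmetric, $\tr(XH_4)=\tr(H_4X^T)$. The first part of \Cref{lem:gradtr} then gives the gradient $H_4$.

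For \eqref{eq:prop22}, I first set $D=\diag(Y,X)$ and compute $DHD^T$ blockwise:
\begin{align*}
DHD^T=\begin{bmatrix} YH_1Y^T & YH_2X^T\\ XH_2^TY^T & XH_4X^T\end{bmatrix}.
\end{align*}
Multiplying by $G$ and taking the trace gives
\begin{align*}
\tr(G_1YH_1Y^T)+\tr(G_2XH_2^TY^T)+\tr(G_2^TYH_2X^T)+\tr(G_4XH_4X^T).
\end{align*}
The first term is constant in $X$. Transposing the second term and using cyclicity shows it equals the third, $\tr(G_2^TYH_2X^T)$. By the first part of \Cref{lem:gradtr}, the two together contribute $2G_2^TYH_2$. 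The last term has the form $\tr(BXCX^T)$ with $B=G_4$ and $C=H_4$. The second part of \Cref{lem:gradtr} gives $G_4XH_4+G_4^TXH_4^T$, which equals $2G_4XH_4$ by symmetry.

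For \eqref{eq:prop23}, expanding gives
\begin{align*}
\tr(VG_1V^T)+\tr(VG_2W^T)+\tr(WG_2^TV^T)+\tr(WG_4W^T).
\end{align*}
The middle two terms are equal by transposition, so together they give $2VG_2$. The last term is $\tr(IWG_4W^T)$ with $B=I$ and $C=G_4$. \Cref{lem:gradtr} yields $WG_4+WG_4^T=2WG_4$.

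There is no real obstacle here, only careful bookkeeping. The one point requiring care is that each cross term be rewritten exactly in the form $\tr(AX^T)$, with the correct $A$, before \Cref{lem:gradtr} is applied.
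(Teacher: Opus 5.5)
Your proof is correct and follows the paper's own argument: expand the block products, discard the terms that do not involve the differentiation variable, and apply \Cref{lem:gradtr} term by term, using $G=G^T$ and $H=H^T$ to merge the two equal cross terms and to simplify $G_4XH_4+G_4^TXH_4^T$ and $WG_4+WG_4^T$. The paper leaves two details implicit that you state: the explicit transposition/cyclicity step for the cross terms, and the observation that $Y$ must be $n\times n$ (so $m=n$) for the block products to be defined.
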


\begin{proof}
For this proof, we make use of \Cref{lem:gradtr}. To show \eqref{eq:prop21}, note that
\begin{align*}
    \nabla_X \tr\left(\begin{bmatrix}
            Y & \\ & X
        \end{bmatrix} H\right) = \nabla_X \tr\left(\begin{bmatrix}
            YH_1 & YH_2 \\ XH_2^T & XH_4
        \end{bmatrix} \right) =\nabla_X \tr\left(
            YH_1 + XH_4
         \right) = H_4.
\end{align*}
Next, \eqref{eq:prop22} holds due to
\begin{align*}
    &\nabla_X \tr\left(G\begin{bmatrix}
            Y & \\ & X
        \end{bmatrix} H \begin{bmatrix}
            Y^T & \\ & X^T
        \end{bmatrix}\right) =\nabla_X \tr\left(\begin{bmatrix}
            G_1Y & G_2X \\ G_2^TY& G_4X
        \end{bmatrix}  \begin{bmatrix}
            H_1Y^T & H_2X^T \\ H_2^T Y^T&  H_4X^T
        \end{bmatrix}\right) \\
        &\qquad =\nabla_X \tr\left(
            G_1YH_1Y^T+G_2XH_2^TY^T  +G_2^TYH_2X^T+G_4XH_4X^T \right) =  2G_2^TYH_2+2G_4XH_4.
\end{align*}
Finally, we can show \eqref{eq:prop23} by observing
\begin{align*}
&\nabla_W \tr \left( \begin{bmatrix}
            V & W
        \end{bmatrix} G \begin{bmatrix}
            V^T  \\ W^T
        \end{bmatrix} \right) = \nabla_W \tr \left( \begin{bmatrix}
            V & W
        \end{bmatrix} \begin{bmatrix}
            G_1V^T+G_2W^T  \\ G_2^TV^T+G_4W^T
        \end{bmatrix} \right) \\
        &  \qquad = \nabla_W \tr \left(         
            VG_1V^T+VG_2W^T + WG_2^TV^T+WG_4W^T
         \right)= 2VG_2+2WG_4.    
\end{align*}
    
\end{proof}
Finally, we state a brief proposition regarding the convergence of a variant of a series.
\begin{proposition} \label{lem:akbk}
    Let $(a_k)_{k\in\mathbb{N}}\subset \R$ be a convergent series, i.e., there exists $\nu_a\in\R$ such that $\lim_{K\to\infty} \sum_{k=1}^Ka_k=\nu_a$. Next, let $(b_k)_{k\in\mathbb{N}}\subset \R$ be a convergent sequence, i.e., there exists $\nu_b\in\R$ such that $\lim_{k\to\infty} b_k =\nu_b$. Then, it holds that $\lim_{K\to\infty}\sum_{k=1}^Ka_{K-k+1}b_k=\nu_a\nu_b$.
\end{proposition}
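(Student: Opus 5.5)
The approach is summation by parts. Set $A_j:=\sum_{i=1}^j a_i$ with $A_0:=0$, so $A_j\to\nu_a$ and $\sup_j|A_j|=:\alpha<\infty$. Reindexing with $j=K-k+1$ gives $\sum_{k=1}^K a_{K-k+1}b_k=\sum_{j=1}^K a_j b_{K-j+1}$. Abel summation then yields
\begin{align*}
\sum_{j=1}^K a_j b_{K-j+1} = A_K b_1 + \sum_{j=1}^{K-1} A_j\,\bigl(b_{K-j+1}-b_{K-j}\bigr).
\end{align*}
Next, replace each $A_j$ by $\nu_a+(A_j-\nu_a)$. The $\nu_a$-part telescopes to $\nu_a(b_K-b_1)$. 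Together with $A_K b_1\to\nu_a b_1$, it contributes $\nu_a b_1+\nu_a(\nu_b-b_1)=\nu_a\nu_b$ in the limit.

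The remaining step is to show that
\begin{align*}
R_K:=\sum_{j=1}^{K-1}(A_j-\nu_a)\bigl(b_{K-j+1}-b_{K-j}\bigr)\to 0 .
\end{align*}
This is a Toeplitz-type argument. Write $d_i:=b_{i+1}-b_i$, fix $\varepsilon>0$, and choose $J$ with $|A_j-\nu_a|<\varepsilon$ for $j\ge J$. Split $R_K$ into the terms with $j<J$ and those with $j\ge J$.
\begin{itemize}
\item \emph{Terms with $j<J$.} There are finitely many of them. Each has the bounded factor $|A_j-\nu_a|\le\alpha+|\nu_a|$ multiplied by $d_{K-j}$, and $d_{K-j}\to0$ as $K\to\infty$ because $(b_k)$ converges. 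So this part tends to $0$.
\item \emph{Terms with $j\ge J$.} This part is bounded by $\varepsilon\sum_{i}|d_i|$.
\end{itemize}

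This second bound is the main obstacle. It needs $(b_k)$ to have bounded variation, $\sum_i|b_{i+1}-b_i|<\infty$, which convergence of $(b_k)$ alone does not provide. This holds, for example, if $(b_k)$ is eventually monotone, or if it converges geometrically like the truncated Gramian/trace sequences to which the proposition is applied later. Under that property the argument closes and gives $\nu_a\nu_b$.

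For merely convergent $(a_k)$ and $(b_k)$ the statement as worded can fail. Take $a_k=(-1)^k/\sqrt{k}$ and $b_k=(-1)^k/\sqrt{k}$, so $\nu_b=0$. The $K$-th sum has terms of constant sign $(-1)^{K+1}/\sqrt{(K-k+1)k}$, and its modulus tends to $\pi\neq0=\nu_a\nu_b$. There is an alternative route that needs no assumption on $b$: write $b_k=\nu_b+e_k$, use $\sum_k a_{K-k+1}\nu_b=\nu_b A_K$, and bound $\sum_k a_{K-k+1}e_k$ by $\sup_{k>L}|e_k|\cdot\sum|a_i|$ plus a finite head. That route instead requires absolute convergence of $\sum a_k$.

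So in writing the proof I would state explicitly which of the two extra hypotheses is being used: bounded variation of $(b_k)$ via the Abel summation above, or $\sum_k|a_k|<\infty$. I would then verify that hypothesis at the point where the paper invokes this proposition.
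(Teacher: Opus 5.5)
Your proposal is correct. Its most important contribution is the diagnosis that the proposition as worded is false. The paper gives no proof of it, so there is no argument of the authors to compare routes with.

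Your counterexample checks out. The series $\sum_k(-1)^k/\sqrt{k}$ converges and $b_k=(-1)^k/\sqrt{k}\to 0$, yet $\sum_{k=1}^K a_{K-k+1}b_k=(-1)^{K+1}\sum_{k=1}^K\bigl(k(K+1-k)\bigr)^{-1/2}$. Since $k(K+1-k)\le (K+1)^2/4$, every term has modulus at least $2/(K+1)$, so the sum has modulus at least $2K/(K+1)$; in fact it tends to $\int_0^1 (x(1-x))^{-1/2}\,dx=\pi$.

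This is exactly Cauchy's classical example. With $b=a$, your sum is the $(K+1)$-st term of the Cauchy product of $\sum a_k$ with itself. More generally, if $b_k=\sum_{j\le k}\beta_j$ are partial sums of a convergent series, then $\sum_{k=1}^K a_{K-k+1}b_k=\sum_{i+j\le K+1}a_i\beta_j$ is the $K$-th partial sum of a Cauchy product. The proposition would therefore assert that Cauchy products of convergent series always converge to the product, which is false.

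Both of your repairs are sound:
\begin{itemize}
\item Under $\sum_i|b_{i+1}-b_i|<\infty$, the Abel-summation identity, the telescoping of the $\nu_a$-part, and the head/tail estimate for $R_K$ are all correct.
\item Your second route, assuming $\sum_k|a_k|<\infty$, is exactly Mertens' theorem.
\end{itemize}

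Where the paper uses the result (Remark~\ref{rem:optlim}), the Mertens-type version is the natural one. It should be stated for matrix-valued sequences with a submultiplicative norm, since the products there are $\Psi_{K-i+1}\widehat{P}^{(i)}$ and $\Pi_{K-i+1}^TX^{(i)}$; your splitting argument carries over verbatim.

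The required absolute convergence holds geometrically under the paper's standing assumptions. From the integral representations of the recursions, $\norm{\Psi_i}\le q\norm{\Psi_{i-1}}$ with $q=\widehat{\beta}^2\Gamma_2/(2\widehat{\alpha})<1$. Likewise $\norm{\Pi_i}\le q'\norm{\Pi_{i-1}}$ with $q'=\beta\widehat{\beta}\sqrt{\Gamma_1\Gamma_2}/(\alpha+\widehat{\alpha})<2\sqrt{\alpha\widehat{\alpha}}/(\alpha+\widehat{\alpha})\le 1$.

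You will meet one further wrinkle when you verify the hypothesis there. $\Psi_1$ and $\Pi_1$ in Theorem~\ref{thm:gradN} are defined through $\widehat{P}^{(K)}$ and $X^{(K)}$, so the sequence playing the role of $(a_k)$ itself changes with $K$, and the proposition does not apply literally. This is closed by combining a geometric bound that is uniform in $K$ with $\Psi_1^{(K)}\to\Psi_1^{(\infty)}$ (and likewise for $\Pi_1$). The discrepancy is then bounded by $\frac{1}{1-q}\sup_i\norm{\widehat{P}^{(i)}}\,\norm{\Psi_1^{(K)}-\Psi_1^{(\infty)}}\to 0$.
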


\section{System theoretical preliminaries}

In this section, we recall the basic facts related to generalized Sylvester and Lyapunov equations appearing in the analysis of BQO systems and the previous work on the $\mathcal{H}_2$ approximation problem of non-polynomial BQO systems.  \label{sec:3}

\subsection{Sylvester and Lyapunov equations} \label{sec:sylvester_bqo}
We start with recalling the most relevant facts regarding the Gramians and the associated Sylvester and Lyapunov equations from \cite{FasGP25,FasGP26}.  Let $\mathcal{S}=(A,B,C,\mathcal{N},\mathcal{M})$ be a BQO system with a stable $A$. As shortly mentioned in the previous section, that is, there exist $\beta>0$ and $0<\alpha \leq -\max_i(\text{Re}(\lambda_i(A)))$ such that $\norm{e^{At}}\leq \beta e^{-\alpha t}$ for $t\geq 0$. In addition, let $\Gamma_1<2\alpha/\beta^2$ with $\Gamma_1:=\max\left\{\norm{\mathcal{N}}^2,\norm{\mathcal{N}_T}^2\right\}$. 
Thus, according to \cite{FasGP25,FasGP26}, the generalized Lyapunov equations 
\begin{subequations}\label{eq:PQ_full}
\begin{align}
    AP+PA^T+\sum_{k=1}^m N_kPN_k^T+BB^T=0,\label{eq:Pfull}\\
    A^TQ+QA+\sum_{k=1}^m N_k^TQN_k+\sum_{j=1}^pM_jPM_j +C^TC=0,\label{eq:Qfull}
\end{align}
\end{subequations}
possess unique symmetric positive semidefinite solutions $P$ and $Q$; see  \cite[Theorem 4.1, Theorem 4.4]{FasGP25}. 
In~\cite{FasGP25}, we define various observability Gramians. Here, the 
observability Gramian $Q$ we use corresponds to 
$Q=Q^S$ in~\cite{FasGP25}.

As discussed in \cite[Remark 3.2]{FasGP26},  $P$ is a Gramian in the true sense, but $Q$ is only a Gramian if additionally $\norm{\mathcal{M}}^2<2\alpha/\beta^2$ holds. See \cite{FasGP25} for further information on this topic. For us, this is irrelevant, since our analysis is based entirely on the Gramian $P$. 

Analogously, let $\widehat{\mathcal{S}}=(\widehat{A},\widehat{B},\widehat{C},\mathcal{\widehat{N}},\mathcal{\widehat{M}})$, given in \eqref{eq:BQOr}, be a reduced BQO system with a stable $\widehat{A}$. Thus, there exist $\widehat{\beta}>0$ and $0<\widehat{\alpha} \leq -\max_i(\text{Re}(\lambda_i(\widehat{A})))$ such that $\norm{e^{\widehat{A}t}}\leq \widehat{\beta} e^{-\widehat{\alpha} t}$ for $t\geq 0$. Additionally, let $\Gamma_2<2\widehat{\alpha}/\widehat{\beta}^2$ with $\Gamma_2:=\max\left\{\norm{\widehat{\mathcal{N}}}^2,\norm{\widehat{\mathcal{N}}_T}^2\right\}$. Then, the generalized Lyapunov equations
\begin{subequations}\label{eq:PQ_red}
\begin{align}
    \widehat{A}\widehat{P}+\widehat{P}\widehat{A}^T+\sum_{k=1}^m \widehat{N}_k\widehat{P}\widehat{N}_k^T+\widehat{B}\widehat{B}^T=0, \label{eq:P_red}\\
    \widehat{A}^T\widehat{Q}+\widehat{Q}\widehat{A}+\sum_{k=1}^m \widehat{N}_k^T\widehat{Q}\widehat{N}_k+\sum_{j=1}^p\widehat{M}_j\widehat{P}\widehat{M}_j +\widehat{C}^T\widehat{C}=0, \label{eq:Q_red}
\end{align}
\end{subequations}
possess unique symmetric positive semidefinite solutions $\widehat{P}$ and $\widehat{Q}$. 
As discussed above for $Q$, $\widehat{Q}$ as a solution to \eqref{eq:Q_red} becomes the true observability Gramian of the reduced system $\widehat{\mathcal{S}}$ if $\norm{\widehat{\mathcal{M}}}^2<2\widehat{\alpha}/\widehat{\beta}^2$.

We will also need Sylvester equations that couple the system matrices of the original system $\mathcal{S}$ and the reduced system $\widehat{\mathcal{S}}$. From \cite[Theorem 3.4]{FasGP26} we know that the equations
\begin{subequations}\label{eq:PQ_mix}
\begin{align}
    AX+X\widehat{A}^T+\sum_{k=1}^m N_kX\widehat{N}_k^T+B\widehat{B}^T=0, \label{eq:P_mix}\\
    A^TY+Y\widehat{A}+\sum_{k=1}^m N_k^TY\widehat{N}_k-\sum_{j=1}^pM_jX\widehat{M}_j -C^T\widehat{C}=0, \label{eq:Q_mix}
\end{align}
\end{subequations}
have unique solutions $X, Y\in\R^{n\times r}$ provided that  $A$ and $\widehat{A}$ are stable, $\norm{\mathcal{N}}^2,\norm{\mathcal{N}_T}^2<2\alpha/\beta^2$, and $\norm{\widehat{\mathcal{N}}}^2,\norm{\widehat{\mathcal{N}}_T}^2<2\widehat{\alpha}/\widehat{\beta}^2$.
Then the state-space representation of the error system $\mathcal{S}_e=\mathcal{S}-\widehat{\mathcal{S}}=(A_e,B_e,C_e,\mathcal{N}_e,\mathcal{M}_e)$  between the 
full and reduced BQO systems can be directly written as $A_e=\diag\left(A,\widehat{A}\right)$, $B_e=\left[ \begin{smallmatrix}
    B \\ \widehat{B}
\end{smallmatrix}\right]$, $C_e=\left[C \; -\widehat{C}\right]$, $\mathcal{N}_e=\left[ N_{e,1} \, \dots \, N_{e,m}\right]$, where $N_{e,k}=\diag\left(N_k,\widehat{N}_k\right)$ for all $k=1,\dots,m$, and $\mathcal{M}_e=\left[ M_{e,1} \, \dots \, M_{e,p}\right]$, where $M_{e,j}=\diag\left(M_j,-\widehat{M}_j\right)$ for all $j\in 1,\dots, p$.
For this error system, we consider the following generalized Lyapunov equations where the solutions can be reconstructed out of the solutions to the previous generalized Lyapunov and Sylvester equations. These equations and their solutions play the fundamental role in defining the $\mathcal{H}_2$ norm for BQO systems. 
\begin{corollary}{\cite[Corollary 3.6]{FasGP26}} \label[corollary]{cor:err_Lyap}
Consider BQO systems $\mathcal{S}$ and $\widehat{\mathcal{S}}$ with stable $A$ and $\widehat{A}$ with stability parameters $\alpha,\beta,\widehat{\alpha},\widehat{\beta}$ such that $\max\left\{\norm{\mathcal{N}}^2,\norm{\mathcal{N}_T}^2\right\} < 2\alpha/\beta^2$ and $\max\left\{\norm{\widehat{\mathcal{N}}}^2,\norm{\widehat{\mathcal{N}}_T}^2\right\} < 2\widehat{\alpha}/\widehat{\beta}^2$.
Then it holds
\begin{align*}
    A_eP_e+P_eA_e^T+&\sum_{k=1}^mN_{e,k}P_eN_{e,k}^T+ B_eB_e^T =0,\\
    A_e^TQ_e+Q_eA_e+&\sum_{k=1}^mN_{e,k}^TQ_eN_{e,k}  + \sum_{j=1}^pM_{e,j}P_eM_{e,j} + C_e^TC_e=0,
\end{align*}
with unique $P_e=\begin{bmatrix}
    P & X\\ X^T &\widehat{P}
\end{bmatrix}$ and $Q_e=\begin{bmatrix}
    Q & Y\\ Y^T &\widehat{Q}
\end{bmatrix}$ and unique solutions $P,Q,\widehat{P},\widehat{Q},X,Y$ to \eqref{eq:Pfull}, \eqref{eq:Qfull}, \eqref{eq:P_red}, \eqref{eq:Q_red}, \eqref{eq:P_mix}, and \eqref{eq:Q_mix}.
\end{corollary}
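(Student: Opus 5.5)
The plan is to verify the two generalized Lyapunov equations blockwise, using the block-diagonal structure of $A_e$, $N_{e,k}$ and $M_{e,j}$, and then to obtain uniqueness from the stability hypotheses on the error system. Throughout, write $P_e$ and $Q_e$ in the indicated $2\times 2$ block form, with diagonal blocks of sizes $n$ and $r$.

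\emph{Reachability equation.} Since $A_e=\diag(A,\widehat{A})$ and $N_{e,k}=\diag(N_k,\widehat{N}_k)$, the $(1,1)$ block of $A_eP_e+P_eA_e^T+\sum_k N_{e,k}P_eN_{e,k}^T+B_eB_e^T$ reads
\[
AP+PA^T+\sum_k N_kPN_k^T+BB^T,
\]
which vanishes by \eqref{eq:Pfull}. The $(1,2)$ block reads
\[
AX+X\widehat{A}^T+\sum_k N_kX\widehat{N}_k^T+B\widehat{B}^T,
\]
which vanishes by \eqref{eq:P_mix}. The $(2,2)$ block vanishes by \eqref{eq:P_red}, and the $(2,1)$ block is the transpose of the $(1,2)$ block.

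\emph{Observability equation.} Here the sign conventions matter. Since $M_{e,j}=\diag(M_j,-\widehat{M}_j)$, we get
\[
M_{e,j}P_eM_{e,j}=\begin{bmatrix} M_jPM_j & -M_jX\widehat{M}_j\\ -\widehat{M}_jX^TM_j & \widehat{M}_j\widehat{P}\widehat{M}_j\end{bmatrix}.
\]
Likewise, $C_e^TC_e$ has blocks $C^TC$, $-C^T\widehat{C}$ and $\widehat{C}^T\widehat{C}$. The $(1,1)$ and $(2,2)$ blocks then reproduce \eqref{eq:Qfull} and \eqref{eq:Q_red}. The $(1,2)$ block becomes
\[
A^TY+Y\widehat{A}+\sum_k N_k^TY\widehat{N}_k-\sum_j M_jX\widehat{M}_j-C^T\widehat{C},
\]
which is exactly \eqref{eq:Q_mix}. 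Symmetry gives the $(2,1)$ block, using that the $M_j$ and $\widehat{M}_j$ are symmetric. Hence the assembled $P_e$ and $Q_e$ solve the error equations, and existence follows from the existence results for the six component equations: \cite[Theorem 4.1, 4.4]{FasGP25} and \cite[Theorem 3.4]{FasGP26}.

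\emph{Uniqueness.} This is the main point needing care, because the error system must satisfy a stability/smallness condition. The simplest route avoids that. Any solution $\tilde P_e$ of the error reachability equation splits blockwise into three decoupled linear equations: the $(1,1)$, $(1,2)$ and $(2,2)$ blocks must satisfy \eqref{eq:Pfull}, \eqref{eq:P_mix} and \eqref{eq:P_red}. This works because the operator $\mathcal{L}(Z)=A_eZ+ZA_e^T+\sum_k N_{e,k}ZN_{e,k}^T$ acts blockwise, owing to the block-diagonal structure. Each of these equations has a unique solution under the stated hypotheses, so $\tilde P_e=P_e$. With $P_e$ now fixed, the observability equation likewise decouples into \eqref{eq:Qfull}, \eqref{eq:Q_mix} and \eqref{eq:Q_red} with known right-hand sides. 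Uniqueness for these follows from invertibility of the operators $Z\mapsto A^TZ+ZA+\sum_k N_k^TZN_k$, its mixed counterpart, and its reduced counterpart. These operators are the adjoints of the reachability-type operators and are therefore invertible under the same conditions involving $\norm{\mathcal{N}_T}$ and $\norm{\widehat{\mathcal{N}}_T}$. This gives uniqueness of $Q_e$ and completes the argument.
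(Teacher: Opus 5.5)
Your proposal is correct and is essentially the argument behind the cited result; the paper summarizes it by saying the error Gramians are reconstructed from the six component solutions. The block-diagonal structure of $A_e$, $N_{e,k}$, $M_{e,j}$ decouples both error equations blockwise, and your blockwise uniqueness argument rightly avoids a smallness condition on $\mathcal{S}_e$ itself, which need not hold even when it holds for $\mathcal{S}$ and $\widehat{\mathcal{S}}$ separately. The only detail to add is that, for a candidate solution not assumed symmetric, the $(2,1)$ block satisfies the transpose of the mixed equation (using symmetry of $M_j$ and $\widehat{M}_j$ in the $Q_e$ case), so it equals $X^T$, resp.\ $Y^T$, by the same uniqueness.
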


In the following, we assume that  the relevant  generalized Sylvester and Lyapunov equations have
unique solutions.

\subsection{$\sH_2$ optimal reduction for BQO systems}
\label{sec:H2nonpolynomial}
Now, we briefly recall $\sH_2$ optimal reduction for BQO systems as it is discussed in \cite{FasGP26}. This reference is the foundation for the current manuscript.

In \cite{FasGP26}, we studied 
\emph{non-polynomial} BQO systems, and developed their $\sH_2$ analysis and  $\sH_2$ optimal model reduction framework. Following~\cite{FasGP26}, we introduce the  kernels $h_{C,i}$ and $h_{M,i,j}$ for the BQO system \eqref{eq:BQO}  as 
\begin{subequations}     \label{eq:volt_kern}
 \begin{align}
     &h_{C,i}(t_1,\dots, t_i):=C\bar{P}_i(t_1,\dots, t_i)\in\R^{p\times m^i},  \quad \text{ for } \, i\geq 1,\\
     &h_{M,i,j}(t_1,\dots,t_i,\bar{t}_1,\dots \bar{t}_j) := \mathfrak{M}\left(\bar{P}_i(t_1,\dots, t_i)\otimes \bar{P}_j(\bar{t}_1,\dots, \bar{t}_j)\right) \in\R^{p\times m^{i+j}},  \quad \text{ for } \, i,j\geq 1,
 \end{align}
 \end{subequations}
 where 
 \begin{align*}\mathfrak{M}&=\begin{bmatrix}\vect(M_1) \, \dots \, \vect(M_p)\end{bmatrix}^T\in\R^{p\times n^2}, \\ 
     \bar{P}_1(t_1)&=e^{At_1}B \in \R^{n\times m}, \\
     \bar{P}_i(t_1,\dots ,t_i) &= e^{At_i}\mathcal{N}\left(I_m\otimes \bar{P}_{i-1}(t_1,\dots,t_{i-1})\right) \in\R^{n\times m^i}, \text{ for } i >1.
 \end{align*}
 Then, \cite{FasGP26} shows that the input-output map $u\to y$ for $\mathcal{S}$ can be written as 
\begin{align}
        y(t)&= \sum_{i=1}^\infty \int_0^\infty \dots \int_0^\infty h_{C,i}(t_1,\dots,t_i) (u\otimes \dots \otimes u) dt_1 \dots dt_i \notag \\
        &\qquad +\sum_{i,j=1}^\infty \int_0^\infty \dots \int_0^\infty h_{M,i,j}(t_1,\dots,t_i,\bar{t}_1,\dots,\bar{t}_j) 
          (u\otimes \dots \otimes u) dt_1 \dots dt_i d\bar{t}_1 \dots d\bar{t}_j, \label{eq:iomap}
    \end{align}
using the kernels $h_{C,i}$ and $h_{M,i,j}$ defined in \eqref{eq:volt_kern}. The similar derivation applies to the reduced BQO system \eqref{eq:BQOr}.

Next, based on the formula~\eqref{eq:iomap}, \cite{FasGP26} introduces the $\sH_2$ norm for BQO systems as
\begin{align}
        \norm{\mathcal{S}}_{\mathcal{H}_2}^2 &:= \sum_{i=1}^\infty \int_0^\infty \dots \int_0^\infty \norm{h_{C,i}}_F^2 dt_1 \dots dt_i 
        + \sum_{i,j=1}^\infty \int_0^\infty \dots \int_0^\infty \norm{h_{M,i,j}}_F^2 dt_1\dots dt_id\bar{t}_1 \dots d\bar{t}_j, \label{eq:defH2normfull}        
    \end{align}
    assuming that all integrals exist; see \cite[Definition 4.1]{FasGP26}. This leads to a system-theoretic error description, namely$\norm{\mathcal{S}-\widehat{\mathcal{S}}}_{\sH_2}$, for computing the distance between two BQO systems. Moreover, \cite[Corollary 4.8]{FasGP26} 
    show that, similar to the linear case, the $\mathcal{H}_2$ norm
    provides an upper bound for the output error $y-\widehat{y}$, namely given as 
    \begin{align}
    \sup_{t\geq 0}\norm{y(t)-\widehat{y}(t)}_2^2 \leq \norm{\mathcal{S}-\widehat{\mathcal{S}}}_{\mathcal{H}_2}^2 \alpha(u), \label{eq:yerr_est}
\end{align}
where $y$ and $\widehat{y}$ are the corresponding outputs of $\mathcal{S}$ and $\widehat{\mathcal{S}}$, respectively, when the same input $u$ is applied to both systems. Here, the term $\alpha(u)$ describes the dependency on the input $u$. This motivates developing $\mathcal{H}_2$ optimal model order reduction approaches because a small $\mathcal{H}_2$ error provides us with a small estimate on the output error between the original and a reduced BQO system. 

Via gradient derivations, \cite{FasGP26} derives the first-order optimality conditions for  
$\mathcal{H}_2$ optimal model reduction of BQO systems as stated in the next result. 
\begin{theorem}{\cite[Theorem 5.1]{FasGP26}} \label{thm:grad}
    Given the full BQO system $\mathcal{S}$ in~\eqref{eq:BQO}, assume that the reduced BQO system $\widehat{\mathcal{S}}$ in~ \eqref{eq:BQOr} is locally $\mathcal{H}_2$ optimal in the sense that $\widehat{\mathcal{S}}$ is a local minimum of $\norm{\mathcal{S}-\widehat{\mathcal{S}}}_{\mathcal{H}_2}$. Furthermore, suppose that $\widehat{P},X$ are the unique solutions to \eqref{eq:P_red} and \eqref{eq:P_mix} and also suppose that $\Psi=\Psi^T\in\R^{r\times r}$ and $\Pi\in\R^{n\times r}$ are the unique solutions to the generalized Lyapunov and Sylvester equations
     \begin{subequations}
    \begin{align}
        \widehat{A}^T\Psi+\Psi\widehat{A}+\sum_{k=1}^m\widehat{N}_k^T\Psi\widehat{N}_k+2\sum_{j=1}^p\widehat{M}_j\widehat{P}\widehat{M}_j+\widehat{C}^T\widehat{C}&=0, \label{eq:lyap_psi} \\
        A^T\Pi+\Pi\widehat{A}+\sum_{k=1}^mN_k^T\Pi\widehat{N}_k+2\sum_{j=1}^pM_jX\widehat{M}_j+C^T\widehat{C}&=0. \label{eq:lyap_phi}
    \end{align}
     \end{subequations}
    Then the following optimality conditions of first order hold \\
    \begin{subequations}\label{eq:optcond}
    \noindent\begin{minipage}{.5\linewidth}    
    \begin{align}
        0 &= \Psi\widehat{P}-\Pi^TX, \label{eq:optcond1}\\
        0 &= \Psi\widehat{B}-\Pi^TB,\label{eq:optcond2}\\
        0 &= \Psi\widehat{N}_k\widehat{P}-\Pi^TN_kX,\label{eq:optcond3}        
    \end{align}    
    \end{minipage}
    \begin{minipage}{.45\linewidth}
    \begin{align}
    0 &= \widehat{C}\widehat{P}-CX,\label{eq:optcond4}\\
        0 &= \widehat{P}\widehat{M}_j\widehat{P}-X^TM_jX,\label{eq:optcond5}
    \end{align}
    \end{minipage}
    \end{subequations}  \\ 

    \noindent
    for all $k=1,\dots, m$ and $j=1,\dots, p$.
    
\end{theorem}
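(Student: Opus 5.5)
The plan is to write the squared $\mathcal{H}_2$ error as a trace expression in the reduced matrices, differentiate it with \Cref{lem:gradtr,lem:gradtrblock}, and rewrite the resulting gradients through adjoint (Lagrange) equations that turn out to be \eqref{eq:lyap_psi} and \eqref{eq:lyap_phi}.

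\emph{Step 1: error as a trace.} Summing the kernel norms in \eqref{eq:defH2normfull} and using the series representation of $P_e$ and $Q_e$ from \Cref{cor:err_Lyap}, I take from \cite{FasGP26} that
\begin{align*}
E:=\norm{\mathcal{S}-\widehat{\mathcal{S}}}_{\mathcal{H}_2}^2=\tr\left(C_eP_eC_e^T\right)+\sum_{j=1}^p\tr\left(P_eM_{e,j}P_eM_{e,j}\right).
\end{align*}
Splitting the blocks gives
\begin{align*}
E=\tr(CPC^T)-2\tr(CX\widehat{C}^T)+\tr(\widehat{C}\widehat{P}\widehat{C}^T)+\sum_{j}\left[\tr(PM_jPM_j)-2\tr(X^TM_jX\widehat{M}_j)+\tr(\widehat{P}\widehat{M}_j\widehat{P}\widehat{M}_j)\right].
\end{align*}
The reduced quantities enter $E$ explicitly through $\widehat{C}$ and $\widehat{M}_j$, and implicitly through $X$ and $\widehat{P}$. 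The latter depend on $\widehat{A}$, $\widehat{B}$ and $\widehat{N}_k$ via \eqref{eq:P_mix} and \eqref{eq:P_red}.

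\emph{Step 2: explicit gradients.} By \Cref{lem:gradtr}, differentiating in $\widehat{C}$ gives $2(\widehat{C}\widehat{P}-CX)$, which is \eqref{eq:optcond4}. Using the symmetry of $M_j$ and $\widehat{M}_j$, differentiating in $\widehat{M}_j$ gives $2(\widehat{P}\widehat{M}_j\widehat{P}-X^TM_jX)$, which is \eqref{eq:optcond5}. Setting both to zero at a local minimum proves these two conditions.

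\emph{Step 3: implicit gradients via adjoints.} I perturb $\widehat{A}\to\widehat{A}+\Delta$. This induces perturbations $\Delta_X$ and $\Delta_{\widehat{P}}$ solving the linearized equations
\begin{align*}
A\Delta_X+\Delta_X\widehat{A}^T+\sum_k N_k\Delta_X\widehat{N}_k^T&=-X\Delta^T,\\
\widehat{A}\Delta_{\widehat{P}}+\Delta_{\widehat{P}}\widehat{A}^T+\sum_k\widehat{N}_k\Delta_{\widehat{P}}\widehat{N}_k^T&=-(\Delta\widehat{P}+\widehat{P}\Delta^T).
\end{align*}
The first-order change of $E$ is
\begin{align*}
\delta E=-2\tr\left(\widehat{C}^TC\Delta_X\right)-4\sum_j\tr\left(\widehat{M}_jX^TM_j\Delta_X\right)+\tr\left(\widehat{C}^T\widehat{C}\Delta_{\widehat{P}}\right)+2\sum_j\tr\left(\widehat{M}_j\widehat{P}\widehat{M}_j\Delta_{\widehat{P}}\right).
\end{align*}
The coefficient matrices acting on $\Delta_X$ and $\Delta_{\widehat{P}}$ are exactly the inhomogeneities of \eqref{eq:lyap_phi} (up to sign) and \eqref{eq:lyap_psi}, with the factor $2$ in front of $M$ appearing there. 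Rather than solving for $\Delta_X,\Delta_{\widehat{P}}$, I will use the standard duality identity: for the generalized Sylvester operator $\mathcal{L}(Z)=AZ+Z\widehat{A}^T+\sum_kN_kZ\widehat{N}_k^T$ and its adjoint $\mathcal{L}^*(\Pi)=A^T\Pi+\Pi\widehat{A}+\sum_kN_k^T\Pi\widehat{N}_k$, one has $\tr(\Pi^T\mathcal{L}(Z))=\tr(\mathcal{L}^*(\Pi)^TZ)$, and similarly for the reduced Lyapunov operator. Applying this gives
\begin{align*}
\delta E=-2\tr\left(\Pi^TX\Delta^T\right)+2\tr\left(\Psi\widehat{P}\Delta^T\right)\cdot(\text{symmetrization}),
\end{align*}
so that $\nabla_{\widehat{A}}E=2(\Psi\widehat{P}-\Pi^TX)$, which is \eqref{eq:optcond1}. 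The identical computation with the perturbation entering through $B\widehat{B}^T$ and $\widehat{B}\widehat{B}^T$ yields \eqref{eq:optcond2}. With the perturbation entering through $N_kX\widehat{N}_k^T$ and $\widehat{N}_k\widehat{P}\widehat{N}_k^T$ it yields \eqref{eq:optcond3}. Uniqueness and solvability of $\Psi$ and $\Pi$ follow from the same stability assumptions invoked in \Cref{cor:err_Lyap}, since they involve the same operators.

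\emph{Main obstacle.} The delicate part is Step 3: the bookkeeping of signs, transposes and factors of $2$. These arise because $\widehat{P}$ enters quadratically in the $\widehat{M}_j$ term, and because $X$ appears in both the $C$ term and the cross $M$ term. This is what forces the adjoint right-hand sides to carry $2\sum_j M_jX\widehat{M}_j$, matching \eqref{eq:lyap_phi}. I would verify the adjoint identities carefully in vectorized Kronecker form, and use \Cref{lem:gradtrblock} to organize the block derivatives. Justifying differentiability of $X$ and $\widehat{P}$ in the reduced matrices also needs care; it follows because the Kronecker-form operators are invertible under the stated norm conditions, so the implicit function theorem applies.
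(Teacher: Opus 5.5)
Your proposal is correct and is essentially the argument of \cite{FasGP26}, which this paper mirrors in its proof of \Cref{thm:gradN}: there the same adjoint computation is phrased as a Lagrangian for the error-system equation defining $P_e$, with multiplier of block form $\left[\begin{smallmatrix}\ast & -\Pi\\ -\Pi^T & \Psi\end{smallmatrix}\right]$, which is exactly the pair $(\Pi,\Psi)$ you obtain blockwise as adjoints of the equations for $X$ and $\widehat{P}$. The only cosmetic point is that no extra ``symmetrization'' factor belongs in $\delta E$: the $2$ in $2\tr(\Psi\widehat{P}\Delta^T)$ comes directly from $\tr(\Psi\Delta\widehat{P})=\tr(\Psi\widehat{P}\Delta^T)$, which uses $\Psi=\Psi^T$ and $\widehat{P}=\widehat{P}^T$.
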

To enforce these optimality conditions, \cite{FasGP26} developed the iterative algorithm \texttt{BQO\_TSIA} (\cite[Algorithm 5.1]{FasGP26}) which requires solving the generalized Sylvester equations
\begin{subequations}
\label{eq:BQO_TSIA}
    \begin{align}
    AX+X\widehat{A}^T+\sum_{k=1}^m N_k X \widehat{N}_k^T + B\widehat{B}^T &=0, \\
    A^T\Pi+\Pi\widehat{A}+\sum_{k=1}^mN_k^T \Pi \widehat{N}_k + 2 \sum_{j=1}^p M_j X \widehat{M}_j + C^T\widehat{C} &=0,
    \end{align}
\end{subequations}
 and uses these solutions $X$ and $\Pi$ for a Petrov-Galerkin projection of the original system in each step to obtain a new reduced BQO system. The computational bottleneck of \texttt{BQO\_TSIA} is the need to repeatedly solve \eqref{eq:BQO_TSIA} with different reduced system matrices at every step. This becomes computationally challenging for large dimensions. 
 In the next section, we follow a new approach that introduces polynomial BQO systems and develop the $\mathcal{H}_2$ theory in that setting. We first derive the $\mathcal{H}_2$ optimality conditions for these systems. The corresponding iterative algorithm, \Cref{alg:T_BQO_TSIA} no longer requires solving generalized Sylvester equations. It only relies on a few solutions of standard Sylvester equations which greatly improves the computational cost.

\section{$\sH_2$ optimal reduction for polynomial BQO systems} \label{sec:4}

The $\mathcal{H}_2$ analysis and $\mathcal{H}_2$ optimal model reduction framework of~\cite{FasGP26} worked with the full Volterra series formulation of the original BQO system.  In this section, we 
define the polynomial BQO system and extend this analysis to this setting. 
We start by approximating the original BQO system by a polynomial BQO system and use this approximation to construct a reduced-order model of the original system. More precisely, the polynomial BQO system is obtained by truncating the associated Volterra series. The resulting finite Volterra series representation allows us to work with finite expressions, which simplifies $\mathcal{H}_2$ norms and leads to a finite sequence of standard Lyapunov and Sylvester equations to solve instead of generalized Lyapunov and Sylvester equations; see \cite{FasGP26}. Based on these expressions, we determine an $\mathcal{H}_2$ optimal reduced-order model for the polynomial BQO system. 

\subsection{Polynomial BQO systems}
Analogous to the bilinear case \cite[Section 5.1]{flagg_multipoint_2015}, we can define polynomial BQO systems. These systems are based on the same kernels \eqref{eq:volt_kern} as original BQO systems and are obtained by truncating the two infinite sums in the input-output map \eqref{eq:iomap}. For simplicity, we use the same truncation order $K$ for both sums, although different truncation orders could be chosen. 
Polynomial systems are often a good choice for an approximation of systems that are based on an infinite sequence of kernels where higher order terms include multiple exponential terms of the kind $e^{At}$. These terms of higher order have to be decreasing in a suitable norm, especially for later times, to ensure convergence of the associated series. Hence, it is often sufficient to consider only a few first terms and cut off after a certain truncation order.

\begin{definition} \label{def:polyBQO}
    Let $\mathcal{S}=(A,B,C,\mathcal{N},\mathcal{M})$ be a BQO system and $K\in\mathbb{N}$. Then we define the polynomial BQO system $\mathcal{S}^K$ as the input-output map $\mathcal{S}^K:u\to y_K$ given by
    \begin{align*}
        y_K(t)&= \sum_{i=1}^K \int_0^\infty \dots \int_0^\infty h_{C,i}(t_1,\dots,t_i) (u\otimes \dots \otimes u) dt_1 \dots dt_i \\
        &\qquad +\sum_{i,j=1}^K \int_0^\infty \dots \int_0^\infty h_{M,i,j}(t_1,\dots,t_i,\bar{t}_1,\dots,\bar{t}_j)          (u\otimes \dots \otimes u) dt_1 \dots dt_i d\bar{t}_1 \dots d\bar{t}_j, 
    \end{align*}
    with the kernels $h_{C,i}$ and $h_{M,i,j}$ defined in \eqref{eq:volt_kern}.
    We define the corresponding $\mathcal{H}_2$ norm of $\mathcal{S}^K$ as
    \begin{align*}
        \norm{\mathcal{S}^K}_{\mathcal{H}_2}^2 &:= \sum_{i=1}^K \int_0^\infty \dots \int_0^\infty \norm{h_{C,i}}_F^2 dt_1 \dots dt_i 
        + \sum_{i,j=1}^K \int_0^\infty \dots \int_0^\infty \norm{h_{M,i,j}}_F^2 dt_1\dots dt_id\bar{t}_1 \dots d\bar{t}_j,
    \end{align*}
 assuming that all integrals exist.
\end{definition}

The definition of the $\sH_2$ norm for polynomial BQO systems is consistent with the definition of the $\sH_2$ norm for BQO systems; see \eqref{eq:defH2normfull}. In fact, the two definitions are identical if the infinite sums in the latter are truncated by setting the higher-order kernels $h_{C,i}$ and $h_{M,i,j}$ to zero for $i,j>K$. 

It follows directly that $\mathcal{S}^K\to\mathcal{S}$ and $\norm{\mathcal{S}^K}_{\mathcal{H}_2} \to \norm{\mathcal{S}}_{\mathcal{H}_2}$ as $K\to \infty.$ Thus, the polynomial BQO system $\mathcal{S}^K$ provides an approximation to the original BQO system $\mathcal{S}$ that converges to $\mathcal{S}$ in the $\mathcal{H}_2$ norm. Analogously, we can define a polynomial BQO system $\widehat{\mathcal{S}}^K$ of reduced dimension and study its corresponding $\mathcal{H}_2$ norm. 

To derive $\mathcal{H}_2$ optimal reduction of a polynomial BQO system, we first need a suitable expression for the error
 $\norm{\mathcal{S}^K-\widehat{\mathcal{S}}^K}_{\mathcal{H}_2}$. Our derivation is based on \cite{FasGP26}.

\begin{lemma}    
Consider the two polynomial BQO systems $\mathcal{S}^K$ and $\widehat{\mathcal{S}}^K$. 
Define $R^{(K)}:=\left[\begin{smallmatrix}
    P^{(K)} & X^{(K)} \\ \left(X^{(K)}\right)^T & \widehat{P}^{(K)}
\end{smallmatrix}\right]$ 
where $P^{(K)}:=\sum_{i=1}^KP_i$, $X^{(K)}:=\sum_{i=1}^K X_i$, and $\widehat{P}^{(K)}:=\sum_{i=1}^K\widehat{P}_i$ with $P_i$,  $X_i$, $\widehat{P}_i$, and $\widehat{X}_i$ being unique solutions to
\begin{subequations}
\label{eq:lyap_collection}
\begin{align}
    0&= AP_1+P_1A^T+BB^T, \\
    0&= AP_i+P_iA^T+\sum_{k=1}^mN_k P_{i-1}N_k^T, \quad \forall \, i=2,\dots, K, \\
    0&= AX_1+X_1\widehat{A}^T+B\widehat{B}^T, \\
    0&= AX_i+X_i\widehat{A}^T+\sum_{k=1}^mN_k X_{i-1}\widehat{N}_k^T, \quad \forall \, i=2, \dots, K, \\
    0&= \widehat{A}\widehat{P}_1+\widehat{P}_1\widehat{A}^T+\widehat{B}\widehat{B}^T, \\
    0&= \widehat{A}\widehat{P}_i+\widehat{P}_i\widehat{A}^T+\sum_{k=1}^m\widehat{N}_k \widehat{P}_{i-1}\widehat{N}_k^T, \quad \forall \, i=2,\dots,K.    
\end{align}    
\end{subequations}
Then, the $\sH_2$ distance between $\mathcal{S}^K$ and $\widehat{\mathcal{S}}^K$ is given by 
\begin{align} 
    \qquad &\norm{\mathcal{S}^K-\widehat{\mathcal{S}}^K}_{\mathcal{H}_2}^2 =  \norm{\mathcal{S}_e^K}_{\mathcal{H}_2}^2 
    =
    \tr\left( C_e R^{(K)} C_e^T \right) + \sum_{j=1}^p \tr \left( R^{(K)} M_{e,j} R^{(K)} M_{e,j} \right) \label{eq:SerrN}.
\end{align}
\end{lemma}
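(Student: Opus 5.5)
The plan is to identify $\mathcal{S}^K-\widehat{\mathcal{S}}^K$ with the truncation $\mathcal{S}_e^K$ of the error system $\mathcal{S}_e=(A_e,B_e,C_e,\mathcal{N}_e,\mathcal{M}_e)$ from \Cref{sec:sylvester_bqo}. Then I evaluate the two truncated sums in \Cref{def:polyBQO} separately, using Gramian-type integrals of the error-system kernels.

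\textbf{Step 1 (error kernels).} Write $\bar{P}^e_i$ for the kernels of $\mathcal{S}_e$ built from $A_e,B_e,\mathcal{N}_e$. Because $A_e$ and all $N_{e,k}$ are block diagonal, an induction on $i$ gives $\bar{P}^e_i=\left[\begin{smallmatrix}\bar{P}_i\\ \widehat{\bar{P}}_i\end{smallmatrix}\right]$, up to the column ordering in the Kronecker factor, which is common to both blocks. Hence
\[
h^e_{C,i}=C\bar{P}_i-\widehat{C}\widehat{\bar{P}}_i .
\]
For the quadratic kernel, $\bar{P}^e_i\otimes\bar{P}^e_j$ also contains mixed blocks $\bar{P}_i\otimes\widehat{\bar{P}}_j$. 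These are annihilated by $\vect(M_{e,l})$ because $M_{e,l}=\diag(M_l,-\widehat{M}_l)$ is block diagonal, so $h^e_{M,i,j}=h_{M,i,j}-\widehat{h}_{M,i,j}$. Consequently $\norm{\mathcal{S}^K-\widehat{\mathcal{S}}^K}_{\mathcal{H}_2}=\norm{\mathcal{S}_e^K}_{\mathcal{H}_2}$.

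\textbf{Step 2 (Gramian integrals).} Define
\[
R_i:=\int_0^\infty\!\!\cdots\!\int_0^\infty \bar{P}^e_i(\bar{P}^e_i)^T\,dt_1\cdots dt_i .
\]
I will show $R_i=\left[\begin{smallmatrix}P_i&X_i\\X_i^T&\widehat{P}_i\end{smallmatrix}\right]$ with the $P_i,X_i,\widehat{P}_i$ from \eqref{eq:lyap_collection}. For $i=1$ this is the standard fact that $\int_0^\infty e^{A_et}B_eB_e^Te^{A_e^Tt}dt$ solves $A_eR_1+R_1A_e^T+B_eB_e^T=0$. For $i>1$, Fubini and the recursion give
\[
R_i=\int_0^\infty e^{A_et}\,\mathcal{N}_e(I_m\otimes R_{i-1})\mathcal{N}_e^T\,e^{A_e^Tt}\,dt ,
\qquad
\mathcal{N}_e(I_m\otimes R_{i-1})\mathcal{N}_e^T=\sum_k N_{e,k}R_{i-1}N_{e,k}^T .
\]
So $R_i$ solves the corresponding standard Lyapunov equation. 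Reading off its blocks yields exactly the equations in \eqref{eq:lyap_collection}, and stability of $A,\widehat{A}$ gives uniqueness. Summing, $R^{(K)}=\sum_{i=1}^KR_i$.

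\textbf{Step 3 (evaluate the norm).} For the linear part,
\[
\sum_{i=1}^K\int\norm{C_e\bar{P}^e_i}_F^2=\sum_{i=1}^K\tr\left(C_eR_iC_e^T\right)=\tr\left(C_eR^{(K)}C_e^T\right).
\]
For the quadratic part, row $l$ of $h^e_{M,i,j}$ is $\vect(M_{e,l})^T(\bar{P}^e_i\otimes\bar{P}^e_j)$. Its squared norm is
\[
\vect(M_{e,l})^T\left(\bar{P}^e_i(\bar{P}^e_i)^T\otimes\bar{P}^e_j(\bar{P}^e_j)^T\right)\vect(M_{e,l}).
\]
Integrating over the independent variables $t$ and $\bar{t}$ factorizes the Kronecker product and gives $\vect(M_{e,l})^T(R_i\otimes R_j)\vect(M_{e,l})$. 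By $(R_i\otimes R_j)\vect(M)=\vect(R_jMR_i^T)$ and the symmetry of $R_i$ and $M_{e,l}$, this equals $\tr(R_jM_{e,l}R_iM_{e,l})$. Bilinearity then gives
\[
\sum_{i,j=1}^K\tr\left(R_jM_{e,l}R_iM_{e,l}\right)=\tr\left(R^{(K)}M_{e,l}R^{(K)}M_{e,l}\right),
\]
and summing over $l$ yields \eqref{eq:SerrN}.

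\textbf{Main obstacle.} The delicate step is Step 2. It requires careful bookkeeping of the Kronecker structure $I_m\otimes\bar{P}_{i-1}$, to justify $\mathcal{N}_e(I_m\otimes R_{i-1})\mathcal{N}_e^T=\sum_kN_{e,k}R_{i-1}N_{e,k}^T$. It also requires justifying the interchange of integrals. Since only finitely many kernels appear, the exponential bounds $\norm{e^{At}}\le\beta e^{-\alpha t}$ suffice for absolute integrability, and no contraction condition on $\mathcal{N}$ is needed.
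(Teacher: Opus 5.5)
Your proposal is correct and follows essentially the same route as the paper. Both arguments compute the truncated $\mathcal{H}_2$ norm kernel by kernel, obtaining $\sum_i\tr(C_eR_iC_e^T)+\sum_{l}\sum_{i,j}\tr(R_jM_{e,l}R_iM_{e,l})$, and then sum the Gramian-type integrals into $R^{(K)}$.

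The difference is only in how much is made explicit. You spell out what the paper leaves to ``the same analysis'' applied to the error system: the block structure $\bar{P}^e_i=\left[\begin{smallmatrix}\bar{P}_i\\ \widehat{\bar{P}}_i\end{smallmatrix}\right]$, which holds exactly with no column reordering, and the identification of the blocks of $R_i$ with $P_i,X_i,\widehat{P}_i$. You also prove via the $\vect$--Kronecker identity the trace formula that the paper cites from \cite[Corollary 2.2]{FasGP26}.
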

\begin{proof}
First, we study the polynomial BQO system $\mathcal{S}^K$ and its $\mathcal{H}_2$ norm. It holds from \Cref{def:polyBQO} that
\begin{align*}\norm{\mathcal{S}^K}_{\mathcal{H}_2}^2&=\sum_{i=1}^K \int_0^\infty \dots \int_0^\infty \norm{h_{C,i}}_F^2 dt_1 \dots dt_i 
        + \sum_{i,j=1}^K \int_0^\infty \dots \int_0^\infty \norm{h_{M,i,j}}_F^2 dt_1\dots dt_id\bar{t}_1 \dots d\bar{t}_j \\
        &= \sum_{i=1}^K \int_0^\infty \dots \int_0^\infty \tr\left(h_{C,i}h_{C,i}^T\right)  dt_1 \dots dt_i 
        + \sum_{i,j=1}^K \int_0^\infty \dots \int_0^\infty \tr \left(h_{M,i,j}h_{M,i,j}^T\right) dt_1\dots dt_id\bar{t}_1 \dots d\bar{t}_j \\
        &= \sum_{i=1}^K \int_0^\infty \dots \int_0^\infty \tr\left(C\bar{P}_i\bar{P}_i^T C^T\right)  dt_1 \dots dt_i \\
        & \qquad + \sum_{i,j=1}^K \int_0^\infty \dots \int_0^\infty \tr \left(\mathfrak{M}(\bar{P}_i\otimes \bar{P}_j)(\bar{P}_i\otimes\bar{P}_j)^T\mathfrak{M}^T\right) dt_1\dots dt_id\bar{t}_1 \dots d\bar{t}_j \\
        &=\sum_{i=1}^K \tr (CP_iC^T) + \sum_{i,j=1}^K \tr\left(\mathfrak{M}(P_i\otimes P_j)\mathfrak{M}^T\right) 
        = \tr \left(CP^{(K)}C^T\right) + \tr\left(\mathfrak{M}(P^{(K)}\otimes P^{(K)})\mathfrak{M}^T\right),
\end{align*}
where we used the definition of the kernels $h_{C,i}$ and $h_{M,i,j}$ in \eqref{eq:volt_kern}. Finally, \cite[Corollary 2.2]{FasGP26} yields $\tr\left(\mathfrak{M}(P^{(K)}\otimes P^{(K)})\mathfrak{M}^T\right)=\sum_{j=1}^p \tr\left(P^{(K)}M_jP^{(K)}M_j\right)$.
Then, applying the same analysis to 
$\widehat{\mathcal{S}}^K$ and to the error system $\mathcal{S}^K_e=\mathcal{S}^K-\widehat{\mathcal{S}}^K$ leads to the desired statement.    
\end{proof}

\begin{remark}
For $R^{(K)}=\left[\begin{smallmatrix}
    P^{(K)} & X^{(K)} \\ \left(X^{(K)}\right)^T & \widehat{P}^{(K)}
\end{smallmatrix}\right]$, the following equations hold
\begin{subequations}\label{eq:lyapRN}
\begin{align} 
    0&= A_e R^{(1)} + R^{(1)} A_e^T + B_e B_e^T, \\
    0&= A_e R^{(i)} + R^{(i)} A_e^T + \sum_{k=1}^m N_{e,k} R^{(i-1)} N_{e,k}^T + B_eB_e^T,
\end{align}
\end{subequations}
for all $i=2,\dots,K$, as a direct consequence of \eqref{eq:lyap_collection} where $A_e, B_e$ and $N_{e,k}$ are matrices of the error system $\mathcal{S}_e$ defined above \Cref{cor:err_Lyap}.
 Based on \Cref{cor:err_Lyap}, it is clear that $R^{(K)}\to P_e =\left[\begin{smallmatrix}
    P & X \\ X^T & \widehat{P}
\end{smallmatrix}\right]$ as $K\to\infty$.
\end{remark}

\subsection{$\sH_2$ optimality for polynomial BQO systems}
In this section, we search for an $\sH_2$ optimal polynomial BQO system that is a local minimum of the error expression $\norm{\mathcal{S}^K-\widehat{\mathcal{S}}^K}_{\mathcal{H}_2}^2$ in \eqref{eq:SerrN} 
and derive the optimality conditions for an $\sH_2$ optimal reduced polynomial BQO system. As we will show, our analysis naturally complements the full Volterra series 
approach of \Cref{thm:grad} and \cite[Section 5]{FasGP26}. 
\begin{theorem}\label{thm:gradN}
 Consider the polynomial BQO system $\mathcal{S}^K$ and assume that the reduced polynomial BQO system $\widehat{\mathcal{S}}^K$ is locally $\sH_2$ optimal in the sense of minimizing \eqref{eq:SerrN}. Furthermore, let $\widehat{P}^{(i)}=\sum_{j=1}^i\widehat{P}_j,X^{(i)}=\sum_{j=1}^iX_j$, for all $i=1,\dots,K$, with the unique solutions $\widehat{P}_j$, $X_j$ to \eqref{eq:lyap_collection}. Also, let $\Psi_i=\Psi_i^T\in \R^{r\times r}$ and $\Pi_i\in \R^{n\times r}$ be the unique solutions to the generalized Lyapunov and Sylvester equations
    \begin{subequations}\label{eq:lyapPhiPsi}
    \begin{align}
        0&=\widehat{A}^T\Psi_1+\Psi_1\widehat{A}+2\sum_{j=1}^p\widehat{M}_j\widehat{P}^{(K)}\widehat{M}_j+\widehat{C}^T\widehat{C}, \label{eq:lyap_psi1} \\
        0&=\widehat{A}^T\Psi_{i}+\Psi_{i}\widehat{A}+\sum_{k=1}^m\widehat{N}_k^T \Psi_{i-1} \widehat{N}_k,  \label{eq:lyap_psii} \\
        0&=A^T\Pi_1+\Pi_1\widehat{A}+2\sum_{j=1}^p M_jX^{(K)}\widehat{M}_j+C^T\widehat{C}, \label{eq:lyap_phi1} \\
        0&=A^T\Pi_i+\Pi_i\widehat{A}+\sum_{k=1}^m N_k^T \Pi_{i-1} \widehat{N}_k, \label{eq:lyap_phii} 
    \end{align}
     \end{subequations}
     for all $i=2,\dots,K$.
    Then the following  first order optimality conditions hold:  \begin{subequations}\label{eq:optcondN}
    \begin{align}
        0 &= \sum_{i=1}^K \Psi_{K-i+1}\widehat{P}^{(i)}-\Pi_{K-i+1}^T X^{(i)}, \label{eq:optcondN1}\\
        0 &= \sum_{i=1}^K \Psi_{K-i+1}\widehat{B}- \Pi_{K-i+1}^T B,\label{eq:optcondN2}\\
        0 &= \sum_{i=1}^{K-1} \Psi_{K-i}\widehat{N}_k\widehat{P}^{(i)}-\Pi_{K-i}^TN_kX^{(i)},\label{eq:optcondN3}\\
        0 &= \widehat{C}\widehat{P}^{(K)}-CX^{(K)},\label{eq:optcondN4}\\
        0 &= \widehat{P}^{(K)}\widehat{M}_j\widehat{P}^{(K)}-(X^{(K)})^TM_jX^{(K)}, \label{eq:optcondN5}
    \end{align}
    \end{subequations}
    for all $k=1,\dots, m$ and $j=1,\dots, p$.
\end{theorem}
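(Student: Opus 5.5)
The plan is to differentiate the error expression \eqref{eq:SerrN} with respect to each reduced quantity $\widehat{A},\widehat{B},\widehat{C},\widehat{N}_k,\widehat{M}_j$ and set each gradient to zero. Write $\mathcal{E}:=\tr(C_eR^{(K)}C_e^T)+\sum_j\tr(R^{(K)}M_{e,j}R^{(K)}M_{e,j})$. Only the blocks $X^{(K)},\widehat{P}^{(K)}$ of $R^{(K)}$ and the blocks $-\widehat{C},-\widehat{M}_j$ of $C_e,M_{e,j}$ depend on the reduced matrices.

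\emph{Conditions \eqref{eq:optcondN4} and \eqref{eq:optcondN5}} come directly from \Cref{lem:gradtrblock}. By \eqref{eq:prop23} with $V=C$ and $W=-\widehat{C}$, the gradient of the first trace with respect to $\widehat{C}$ is $-2(CX^{(K)}-\widehat{C}\widehat{P}^{(K)})$. By \eqref{eq:prop21} with $Y=M_j$ and $X=-\widehat{M}_j$, using symmetry of $R^{(K)}M_{e,j}R^{(K)}$, the gradient with respect to $\widehat{M}_j$ is $-2(X^{(K)})^TM_jX^{(K)}+2\widehat{P}^{(K)}\widehat{M}_j\widehat{P}^{(K)}$, up to a constant factor.

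\emph{Conditions for $\widehat{A},\widehat{B},\widehat{N}_k$.} $\mathcal{E}$ depends on these only through the recursively defined $R^{(i)}$, equivalently through $X_i,\widehat{P}_i$ in \eqref{eq:lyap_collection}. I will use a perturbation/adjoint argument.

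1. Linearize: the first variation of $\mathcal{E}$ is $\delta\mathcal{E}=\tr(G\,\delta R^{(K)})$ with $G=C_e^TC_e+2\sum_jM_{e,j}R^{(K)}M_{e,j}$.
2. Perturbing $\widehat{A}\to\widehat{A}+\Delta$ in the recursion \eqref{eq:lyapRN} gives the Sylvester-type equations $A_e\delta R^{(i)}+\delta R^{(i)}A_e^T+\sum_kN_{e,k}\delta R^{(i-1)}N_{e,k}^T+F_i=0$. Here $F_i$ collects the perturbation terms: $\Delta_eR^{(i)}+R^{(i)}\Delta_e^T$ with $\Delta_e=\diag(0,\Delta)$ for $\widehat{A}$, analogous source terms for $\widehat{B}$, and terms involving $R^{(i-1)}$ for $\widehat{N}_k$.
3. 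Introduce adjoint matrices $Q^{(1)},\dots,Q^{(K)}$ solving $A_e^TQ_1+Q_1A_e+G=0$ and $A_e^TQ_i+Q_iA_e+\sum_kN_{e,k}^TQ_{i-1}N_{e,k}=0$. Their lower-right and lower-left blocks are exactly $\Psi_i$ and $-\Pi_i$, hence \eqref{eq:lyapPhiPsi}. To check this, read off the blocks of $G$: the $(2,2)$ block is $\widehat{C}^T\widehat{C}+2\sum\widehat{M}_j\widehat{P}^{(K)}\widehat{M}_j$, and the $(1,2)$ block is $-(C^T\widehat{C}+2\sum M_jX^{(K)}\widehat{M}_j)$.
4. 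Using the trace duality $\tr(Q\,\mathcal{L}(Z))=\tr(\mathcal{L}^*(Q)Z)$ repeatedly, one gets $\delta\mathcal{E}=\sum_{i=1}^K\tr(Q_{K-i+1}F_i)$, with the $\widehat{N}_k$ terms shifted by one index. This is a discrete convolution in the index; it is the same structure as in \Cref{lem:akbk}, which the paper presumably uses later to recover \Cref{thm:grad} as $K\to\infty$.
5. Extract the $(2,2)$-relevant blocks:
 for $\widehat{A}$, $\tr(Q_{K-i+1}(\Delta_eR^{(i)}+R^{(i)}\Delta_e^T))$ gives the gradient $2\sum_i(\Psi_{K-i+1}\widehat{P}^{(i)}-\Pi_{K-i+1}^TX^{(i)})$, i.e.\ \eqref{eq:optcondN1};
 for $\widehat{B}$, the source $B_eB_e^T$ appears in every level $i$, giving \eqref{eq:optcondN2};
 for $\widehat{N}_k$, the source at level $i$ involves $R^{(i-1)}$, so reindexing gives the sum up to $K-1$ with $\Psi_{K-i}$, i.e.\ \eqref{eq:optcondN3}.

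The main obstacle is the index bookkeeping in step 4. Because $R^{(i)}$ contains the cumulative sum and the constant term $B_eB_e^T$ appears at every level, it is delicate to show that the adjoint pairing produces exactly the weights $Q_{K-i+1}$ paired with $R^{(i)}$ rather than with the individual increments $P_i$. I would handle this by proving the duality identity by induction on $K$, carefully separating the variation of the forcing terms from the variation of the operators.
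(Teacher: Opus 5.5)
Your proposal is correct and is essentially the paper's proof. Your adjoint matrices $Q_i$ are exactly the paper's Lagrange multipliers $\Lambda_{K-i+1}$, with $\Psi_i$ in the $(2,2)$ block and $-\Pi_i$ in the $(1,2)$ block; the lower-left block is therefore $-\Pi_i^T$, not $-\Pi_i$. Your step-5 gradients coincide with the paper's $\nabla_{\widehat A}\mathfrak{L}$, $\nabla_{\widehat B}\mathfrak{L}$ and $\nabla_{\widehat N_k}\mathfrak{L}$. The index bookkeeping you flag as the main obstacle is not actually delicate. Because \eqref{eq:lyapRN} is already a recursion in the cumulative $R^{(i)}$, one summation by parts over $i$ against the adjoint equations gives $\delta\mathcal{E}=\sum_{i=1}^K\tr(Q_{K-i+1}F_i)$ directly, so no induction on $K$ is needed.
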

\begin{proof}
    Similarly to \cite{FasGP26}, we consider the $\sH_2$ optimal model reduction problem as the constrained optimization problem     \begin{align*}\min_{\widehat{A},\widehat{B},\widehat{C},\widehat{N}_k,\widehat{M}_j} & \tr\left( C_e R^{(K)} C_e^T \right) + \sum_{j=1}^p \tr \left( R^{(K)} M_{e,j} R^{(K)} M_{e,j} \right), 
    \quad \text{subject to } \eqref{eq:lyapRN},
    \end{align*}
    where we use the error expression for $\norm{\mathcal{S}^K-\widehat{\mathcal{S}}^K}_{\mathcal{H}_2}^2 $ from \eqref{eq:SerrN}.
    This leads to the associated Lagrange function as
    \begin{align*}
        &\mathfrak{L}(\widehat{A},\widehat{B},\widehat{C},\widehat{N}_k,\widehat{M}_j,R^{(1)},\dots, R^{(K)}, \Lambda_1, \dots, \Lambda_K ) \\
        &= \tr\left( C_e R^{(K)}C_e \right) + \sum_{j=1}^p \tr \left( R^{(K)} M_{e,j} R^{(K)} M_{e,j} \right)  + \tr\left(\Lambda_1^T \left( A_e R^{(1)} +R^{(1)}A_e^T + B_e B_e^T \right) \right) \\
         & \quad + \tr\left(\Lambda_2^T \left(A_e R^{(2)} +R^{(2)}A_e^T +\sum_{k=1}^m N_{e,k} R^{(1)}N_{e,k}^T + B_e B_e^T \right) \right)+ \dots \\
    &  \quad + \tr\left(\Lambda_K^T \left(A_e R^{(K)} +R^{(K)}A_e^T +\sum_{k=1}^m N_{e,k} R^{(K-1)}N_{e,k}^T + B_e B_e^T \right) \right), 
    \end{align*}
where $\Lambda_1,\dots, \Lambda_K \in \mathbb{R}^{(n+r)\times(n+r)}$ are the Lagrange multipliers. Now, we use \Cref{lem:gradtr,lem:gradtrblock} to derive the first order conditions. 
By differentiating the Lagrange function with respect to relevant quantities, we obtain
\begin{align*}
    \nabla_{\widehat{C}}\mathfrak{L} &= 2\widehat{C}\widehat{P}^{(K)}-2CX^{(K)}, \\
    \nabla_{\widehat{M}_j}\mathfrak{L} &= 2 \widehat{P}^{(K)} \widehat{M}_j\widehat{P}^{(K)}- 2(X^{(K)})^T M_j X^{(K)}, \\
    \nabla_{R^{(1)}} \mathfrak{L}&= A_e^T \Lambda_1^T+\Lambda_1^TA_e+ \sum_{k=1}^m N_{e,k}^T \Lambda_2^T N_{e,k}, \\
    & \vdots \\
    \nabla_{R^{(K-1)}} \mathfrak{L}&= A_e^T \Lambda_{K-1}^T+\Lambda_{K-1}^TA_e 
     + \sum_{k=1}^m N_{e,k}^T \Lambda_K^T N_{e,k}, \\
    \nabla_{R^{(K)}} \mathfrak{L}&= A_e^T \Lambda_K^T+\Lambda_K^TA_e 
     + 2\sum_{j=1}^p M_{e,j} R^{(K)}M_{e,j}+ C_e^TC_e.
\end{align*}
Hence, setting the first two equations to zero leads to \eqref{eq:optcondN4} and \eqref{eq:optcondN5}. Moreover, setting the expressions for $\nabla_{R^{(i)}}\mathfrak{L}$ to zero follows $\Lambda_{K-i+1}=\left[ \begin{smallmatrix}
    \ast & -\Pi_i \\
    -\Pi_i^T & \Psi_i
\end{smallmatrix} \right]$ with $\Pi_i$ and $\Psi_i$ as unique solutions to \eqref{eq:lyapPhiPsi}. 
Lastly, computing the gradients 
\begin{align*}
    \nabla_{\widehat{A}}\mathfrak{L} &= 2 \left(\sum_{i=1}^K  \Psi_{K-i+1}\widehat{P}^{(i)} - \Pi_{K-i+1}^TX^{(i)}\right), \\
    \nabla_{\widehat{B}} \mathfrak{L} &= 2\left(\sum_{i=1}^K \Psi_{K-i+1}\widehat{B}-\Pi_{K-i+1}^TB\right), \\
    \nabla_{\widehat{N}_k} \mathfrak{L} &= 2\left(\sum_{i=1}^{K-1} \Psi_{K-i}\widehat{N}_k\widehat{P}^{(i)}- \Pi_{K-i}^TN_k X^{(i)}\right),
\end{align*}
and setting these expressions to zero leads to conditions \eqref{eq:optcondN1}, \eqref{eq:optcondN2}, and \eqref{eq:optcondN3}.
\end{proof}
For comparison with the original $\mathcal{H}_2$ optimality conditions, we state a remark using \Cref{lem:akbk}.

\begin{remark} \label{rem:optlim}
    Analyzing the conditions \eqref{eq:optcondN} in \Cref{thm:gradN}, we observe that \eqref{eq:optcondN} yields
    the optimality conditions of \Cref{thm:grad} for the full untruncated BQO case  as we let $K\to\infty$.  This holds true because $\widehat{P}^{(K)}\to\widehat{P}$, $X^{(K)}\to X$, $\sum_{i=1}^K \Psi_i \to \Psi$, and $\sum_{i=1}^{K}\Pi_i\to \Pi$, for $K\to\infty$. 
    More precisely, \eqref{eq:optcond4} and \eqref{eq:optcond5} follow from \eqref{eq:optcondN4} and \eqref{eq:optcondN5}, respectively. For the remaining equations, we use \Cref{lem:akbk} to show the identities. Hence, the optimality conditions in \Cref{thm:gradN} are consistent with the original $\mathcal{H}_2$ optimality conditions in \Cref{thm:grad}.
    \end{remark}
    The most important distinction from the untruncated case  is that 
    the new optimality conditions \eqref{eq:optcondN} are based on solutions to standard Sylvester equations, in contrast to the original optimality conditions \eqref{eq:optcondN} that require solutions to generalized Sylvester equations. This  will significantly simplify the computational cost of the corresponding algorithm as we discuss next.

\section{Computing $\mathcal{H}_2$ optimal reduced models: Algorithm and numerical examples} \label{sec:Alg}
Having established the theoretical analysis and optimality conditions in~\Cref{sec:4}, we now turn our focus to numerical developments. We first develop an algorithm in~\Cref{sec:alg} based on the optimality conditions and then in \Cref{sec:5} apply the algorithm to two numerical examples. 
\subsection{An $\mathcal{H}_2$ optimal algorithm for polynomial BQO systems}
\label{sec:alg}

Our goal is to find a reduced BQO system \eqref{eq:BQOr} that provides a high-fidelity approximation to the original BQO system. Our chosen measure for a good approximation is the difference in the $\mathcal{H}_2$ norm to perform optimal model order reduction. We shortly recall \eqref{eq:yerr_est}, namely $
    \sup_{t\geq 0}\norm{y(t)-\widehat{y}(t)}_2^2 \leq \norm{\mathcal{S}-\widehat{\mathcal{S}}}_{\mathcal{H}_2}^2 \alpha(u)$.
In this context, inspired by this bound on the output error, we search for a reduced BQO system $\widehat{\mathcal{S}}$ such that $\norm{\mathcal{S}-\widehat{\mathcal{S}}}_{\mathcal{H}_2}$ is small. Note that 
\begin{align*}
    \norm{\mathcal{S}-\widehat{\mathcal{S}}}_{\mathcal{H}_2}
    =\norm{\mathcal{S}-\mathcal{S}^K + \mathcal{S}^K -\widehat{\mathcal{S}}^K+\widehat{\mathcal{S}}^K-\widehat{\mathcal{S}}}_{\mathcal{H}_2} 
    \leq \norm{\mathcal{S}-\mathcal{S}^K}_{\mathcal{H}_2}+\norm{\mathcal{S}^K-\widehat{\mathcal{S}}^K}_{\mathcal{H}_2}+ \norm{\widehat{\mathcal{S}}-\widehat{\mathcal{S}}^K}_{\mathcal{H}_2}.
\end{align*}
We focus on minimizing the dominant term, namely the second term $\norm{\mathcal{S}^K-\widehat{\mathcal{S}}^K}_{\mathcal{H}_2}$, especially when $K$ is large or there exists a fast convergence $\mathcal{S}^K\to\mathcal{S}$ and $\widehat{\mathcal{S}}^K\to\widehat{\mathcal{S}}$ . 
Here, the results from \Cref{sec:4} play an important role. We can study the BQO system \eqref{eq:BQO} as a polynomial BQO system and minimize the term $\norm{\mathcal{S}^K-\widehat{\mathcal{S}}^K}_{\mathcal{H}_2}$. The minimization process is based on the optimality conditions in \Cref{thm:gradN} that are consistent with the original $\mathcal{H}_2$ optimality conditions in \Cref{thm:grad}.

A direct construction of a polynomial BQO system that fulfills the optimality conditions in \Cref{thm:gradN} is not straightforward due to the nested summation structure in the conditions. Hence, we have to find another approach so that we can approximately fulfill these conditions. If we compare the polynomial conditions \eqref{eq:optcondN} with the original conditions \eqref{eq:optcond}, we observe that, e.g., $\widehat{P}^{(K)}$ and $X^{(K)}$ are approximations to $\widehat{P}$ and $X$ from the original case and fulfill similar conditions. In the original setting for full BQO systems, a state space transformation $\widehat{x}=\widehat{P}z$ allows us to write the reduced system as a system given by a Petrov-Galerkin projection; see \cite{FasGP26}. This Petrov-Galerkin projection is based on the matrices $X$ and $\Pi$ from \Cref{thm:grad}. 

So now, similar to \cite{flagg_multipoint_2015,FasGP26}, we use a state space transformation of the reduced system, $\widehat{x}=\widehat{P}^{(K)}z$, and
$X^{(K)}$ as an approximation to $X$ and $\sum_{i=1}^{K}\Pi_i=:\Pi^{(K)}$ as an approximation to $\Pi$ from \Cref{thm:grad}. This results in~\Cref{alg:T_BQO_TSIA}, which we call \emph{Truncated Bilinear quadratic output two-sided iteration algorithm}, abbreviated as \texttt{T-BQO\_TSIA}. The fundamental difference from the algorithm \texttt{BQO\_TSIA}~\cite[Algorithm 5.1]{FasGP26}, which enforces the optimality conditions  of~\Cref{thm:grad} for the original non-polynomial BQO systems is that \texttt{T-BQO\_TSIA} requires solving only regular Sylvester equations as opposed to generalized Sylvester equations, that are computational more demanding. 
The proposed algorithm can also be considered as an extension of the truncated B-IRKA (TB-IRKA) algorithms for bilinear systems (where $M_j=0$ holds); see \cite[Algorithm 2]{flagg_multipoint_2015}. Due to the structure of the conditions \eqref{eq:optcondN},  \Cref{alg:T_BQO_TSIA} enforces only the conditions \eqref{eq:optcondN4} and \eqref{eq:optcondN5} explicitly in each iteration while approximating the conditions \eqref{eq:optcondN1}, \eqref{eq:optcondN2}, and \eqref{eq:optcondN3}. This can be seen by observing that $\widehat{P}^{(K)}$ and $X^{(K)}$ play the roles of $\widehat{P}$ and $X$, respectively. A state space transformation of the reduced system, $\widehat{x}=\widehat{P}^{(K)}z$, yields $\widehat{C}=CX^{(K)}$ and $\widehat{M}_j=(X^{(K)})^TM_jX^{(K)}$ directly from \eqref{eq:optcondN4} and \eqref{eq:optcondN5}. This is exactly the construction of $\widehat{C}$ and $\widehat{M}_j$ in \Cref{alg:T_BQO_TSIA}.

\begin{algorithm}
    \caption{\texttt{Truncated Bilinear quadratic output two-sided iteration algorithm (T-BQO-TSIA)}}\label{alg:T_BQO_TSIA}
    \begin{algorithmic}[1]
    \Require $A, B, C, N_k, M_j$ as in \eqref{eq:BQO}, $\widehat{A}, \widehat{B}, \widehat{C}, \widehat{N}_k, \widehat{M}_j$ as in \eqref{eq:BQOr}, truncation index $K$
    \Ensure $\widehat{A}^{opt}, \widehat{B}^{opt}, \widehat{C}^{opt}, \widehat{N}^{opt}_k, \widehat{M}^{opt}_j$ as in \eqref{eq:BQOr}
    \While{not converged}
        \State For $X^{(1)}, X^{(2)}, \dots, X^{(K)}\in\mathbb{R}^{n\times r}$, solve 
        \begin{align*}
            AX^{(1)}+X^{(1)}\widehat{A}^T+B\widehat{B}^T&=0, \\
            AX^{(i)}+X^{(i)}\widehat{A}^T+\sum_{k=1}^mN_kX^{(i-1)}\widehat{N}_k^T+B\widehat{B}^T&=0, \qquad \text{ for }i=2,\dots, K.        
        \end{align*}
                
        \State 
        For $\Pi^{(1)}, \Pi^{(2)}, \dots, \Pi^{(K)}\in\mathbb{R}^{n\times r}$, solve 
        \begin{align*}
        A^T\Pi^{(1)}+\Pi^{(1)}\widehat{A} +2\sum_{j=1}^pM_jX^{(K)}\widehat{M}_j+ C^T\widehat{C} &=0, \\
        A^T\Pi^{(i)}+\Pi^{(i)}\widehat{A}+\sum_{k=1}^mN_k^T\Pi^{(i-1)}\widehat{N}_k +2\sum_{j=1}^pM_jX^{(K)}\widehat{M}_j + C^T\widehat{C}&=0, \qquad  \text{ for }i=2,\dots, K.
        \end{align*}
        
        \State Compute $V=\orth(X^{(K)})$, $W=\orth(\Pi^{(K)})$.
        \State Compute $\widehat{A}=(W^TV)^{-1}W^TAV$, $\widehat{B}=(W^TV)^{-1}W^TB$, $\widehat{C}=CV$, $\widehat{N}_k=(W^TV)^{-1}W^TN_kV$, $\widehat{M}_j=V^TM_jV$.
    \EndWhile
    \State $\widehat{A}^{opt}=\widehat{A}$, $\widehat{B}^{opt}=\widehat{B}$, $\widehat{C}^{opt}=\widehat{C}$, $\widehat{N}^{opt}_k=\widehat{N}_k$ and $\widehat{M}_j^{opt}=\widehat{M}_j$. 
    \end{algorithmic}
    \end{algorithm} 

    Upon convergence of \Cref{alg:T_BQO_TSIA}, we can see that the non-polynomial optimality conditions \eqref{eq:optcond} are fulfilled as $K\to\infty$. The following corollary is a direct consequence of \cite[Theorem 5.3]{FasGP26} and \Cref{rem:optlim}.
    \begin{corollary}\label{cor:alg1cond}
        Assume \Cref{alg:T_BQO_TSIA} converges. Then $\widehat{A}^{opt}$, $\widehat{B}^{opt}$, $\widehat{C}^{opt}$, $\widehat{N}_k^{opt}$, and $\widehat{M}_j^{opt}$ satisfy the optimality conditions \eqref{eq:optcondN} and \eqref{eq:optcond} as $K\to \infty$.
    \end{corollary}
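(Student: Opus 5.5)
Assume \Cref{alg:T_BQO_TSIA} converges, so at its fixed point the reduced matrices are reproduced by the projection built from $V=\orth(X^{(K)})$ and $W=\orth(\Pi^{(K)})$. The proof then has three parts: read off what the fixed point says about $X^{(K)}$ and $\Pi^{(K)}$, show that the conditions \eqref{eq:optcondN4} and \eqref{eq:optcondN5} hold exactly, and show that the remaining conditions hold in the limit $K\to\infty$ via \Cref{rem:optlim} together with \cite[Theorem 5.3]{FasGP26}.

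\emph{Fixed-point structure.} At convergence, $V$ and $X^{(K)}$ have the same range, so $X^{(K)}=VL$ for an invertible $L$; likewise $\Pi^{(K)}=WS$. Since the reduced model is only determined up to a state-space transformation, I will apply the transformation $\widehat{x}=\widehat{P}^{(K)}z$ (or, equivalently, rescale $V$ and $W$ by $L$ and $S$), as described before the algorithm. After this change of basis, $\widehat{C}=CX^{(K)}$ and $\widehat{M}_j=(X^{(K)})^TM_jX^{(K)}$, which are exactly \eqref{eq:optcondN4} and \eqref{eq:optcondN5}. The step needing care is verifying that the transformed $\widehat{P}^{(K)}$ matches the Gramian-like factor in the equations, i.e. that $X^{(K)}$ and $\widehat{P}^{(K)}$ transform consistently under the change of basis. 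I expect this to follow by substituting the projected matrices into the recursion \eqref{eq:lyap_collection} and using that $W^TV$ is invertible; this is the same computation as in \cite[Theorem 5.3]{FasGP26}.

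\emph{Limit $K\to\infty$.} By \Cref{cor:err_Lyap} and the remark after \eqref{eq:lyapRN}, $X^{(K)}\to X$ and $\widehat{P}^{(K)}\to\widehat{P}$. The algorithm's $\Pi^{(K)}$ solves a recursion whose forcing term $2\sum_j M_jX^{(K)}\widehat{M}_j+C^T\widehat{C}$ is repeated at every step. Hence $\Pi^{(K)}$ is a partial sum of a Neumann-type series, and it converges to the solution $\Pi$ of \eqref{eq:lyap_phi}, using the contraction assumption $\Gamma_1<2\alpha/\beta^2$; the analogous statement holds for $\Psi$. So in the limit the fixed point is a fixed point of \texttt{BQO\_TSIA} built from $X$ and $\Pi$, and \cite[Theorem 5.3]{FasGP26} gives \eqref{eq:optcond}. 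For \eqref{eq:optcondN} in the limit, I will apply \Cref{lem:akbk} to the convolution sums in \eqref{eq:optcondN1}--\eqref{eq:optcondN3}, taking the convergent series $\sum\Psi_i$, $\sum\Pi_i$ and the convergent sequences $\widehat{P}^{(i)}$, $X^{(i)}$. This is exactly \Cref{rem:optlim}, and it shows that these sums tend to the left-hand sides of \eqref{eq:optcond1}--\eqref{eq:optcond3}, which vanish.

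\emph{Main obstacle.} The hardest step is justifying that the limits can be exchanged. The reduced matrices at convergence depend on $K$, so one must either assume that the fixed points converge as $K\to\infty$ or argue uniformly in $K$. I would state this as the interpretation of ``as $K\to\infty$''. I would also apply \Cref{lem:akbk} entrywise, which is valid because it is a scalar statement, and use the uniform bounds coming from the stability assumptions.
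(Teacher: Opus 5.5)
Your limit part is the paper's argument. The paper gets the corollary directly from \cite[Theorem 5.3]{FasGP26} and \Cref{rem:optlim}: as $K\to\infty$ the truncated recursions for $X^{(K)}$ and $\Pi^{(K)}$ become the generalized Sylvester equations \eqref{eq:BQO_TSIA}, so a converged run is a converged \texttt{BQO\_TSIA}, which satisfies \eqref{eq:optcond}, and \Cref{lem:akbk} carries this over to \eqref{eq:optcondN}. Your caveat that the fixed point depends on $K$, so limits must be exchanged or convergence of the fixed points assumed, is legitimate; the paper does not address it either. For \Cref{lem:akbk}, what makes the convolution limit valid is the geometric decay of $\norm{\Psi_i}$ and $\norm{\Pi_i}$ (absolute summability) from the stability assumptions, not mere convergence of the series.

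The step that fails is your ``fixed-point structure'' claim that \eqref{eq:optcondN4} and \eqref{eq:optcondN5} hold exactly at finite $K$ by the computation of \cite[Theorem 5.3]{FasGP26}.
\begin{itemize}
\item Both conditions are invariant under state-space transformations of the reduced model. So rescaling $V$ by $L$, or passing to $\widehat{x}=\widehat{P}^{(K)}z$, cannot produce them.
\item With $\widehat{C}=CV$, $\widehat{M}_j=V^TM_jV$ and $X^{(K)}=VL$, they read $\widehat{C}\widehat{P}^{(K)}=\widehat{C}L$ and $\widehat{P}^{(K)}\widehat{M}_j\widehat{P}^{(K)}=L^T\widehat{M}_jL$. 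These essentially require $L=\widehat{P}^{(K)}$.
\item Projecting the $K$-th equation of the recursion with $(W^TV)^{-1}W^T$ only gives
\[
\widehat{A}L+L\widehat{A}^T+\sum_{k=1}^m (W^TV)^{-1}W^TN_kX^{(K-1)}\widehat{N}_k^T+\widehat{B}\widehat{B}^T=0 .
\]
\item For $K\ge 2$ the coupling term involves $X^{(K-1)}$, not $X^{(K)}=VL$, and $X^{(K-1)}$ generally does not lie in $\operatorname{range}(V)$. Hence $L$ does not satisfy the equation defining $\widehat{P}^{(K)}$; that would require $\sum_k\bigl((W^TV)^{-1}W^TN_kX^{(K-1)}-\widehat{N}_k\widehat{P}^{(K-1)}\bigr)\widehat{N}_k^T=0$.
\end{itemize}
So the two conditions fail in general at finite $K$. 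For $K=1$ there is no coupling term, and your computation does go through. In the untruncated setting the coupling term is $\sum_k N_kX\widehat{N}_k^T$ with the same $X=VL$, which is exactly why the projection closes in \cite[Theorem 5.3]{FasGP26}.

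Drop this step. The corollary only claims the conditions as $K\to\infty$, and there $X^{(K-1)},X^{(K)}\to X$. So \eqref{eq:optcondN4}--\eqref{eq:optcondN5} follow from \eqref{eq:optcond4}--\eqref{eq:optcond5} at the limiting \texttt{BQO\_TSIA} fixed point, together with $\widehat{P}^{(K)}\to\widehat{P}$ and $X^{(K)}\to X$, as in \Cref{rem:optlim}.
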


    In light of \Cref{cor:alg1cond}, \Cref{alg:T_BQO_TSIA} can be interpreted, from an algorithmic perspective, as a variant of \texttt{BQO\_TSIA} where, in each iteration, the Sylvester equations are solved via a fixed-point iteration that is terminated after a fixed number of $K$ steps rather than run to convergence.

\subsection{Numerical experiments} \label{sec:5}
In this section, we apply \Cref{alg:T_BQO_TSIA} on two numerical examples and compare it to the existing full $\sH_2$ optimal algorithm from \cite{FasGP26}. 

For more clarity, we denote the compared algorithms here: 
\begin{itemize}
    \item \texttt{BQO\_TSIA}: \cite[Algorithm 5.1]{FasGP26} where the generalized Sylvester equations are solved via a fixed point iteration of regular Sylvester equations.         
    \item \texttt{BQO\_TSIA\_GLGMRES}: \cite[Algorithm 5.1]{FasGP26} where the generalized Sylvester equations are solved using a global variant of GMRES \cite{jbilou_global_1999} 
    \item \texttt{T-BQO\_TSIA(K)}: \Cref{alg:T_BQO_TSIA} where we only solve a fixed number of regular Sylvester equations per iteration as described in \Cref{alg:T_BQO_TSIA}. We choose here $K=2$ and $K=4$ for our experiments. Higher orders were not needed for our cases to guarantee that the truncated error were close enough to the non-polynomial case. 
\end{itemize} 

Similarly to \cite{FasGP26}, we check the convergence of the iterative algorithms by monitoring the change in the squared relative $\sH_2$ error given as $\abs{\eta^{(j+1)}-\eta^{(j)}}/\eta^{(1)}$ with $\eta^{(j)}=\frac{\norm{\mathcal{S}-\widehat{\mathcal{S}}^{(j)}}_{\sH_2}^2}{\norm{\mathcal{S}}_{\sH_2}^2}$ where we denote $\widehat{\mathcal{S}}^{(j)}$ as the reduced system in the $j$-th iteration. More precisely, we approximate $\eta^{(j)}$ by a "tail" expression $\tau^{(j)}$ and use the stopping criterion $\frac{\abs{\tau^{(j+1)}-\tau^{(j)}}}{\abs{\tau^{(1)}}}<10^{-6}$ (see \cite{FasGP26} for more details).

Moreover, we rescale the input $u$ according to \cite{benner_lyapunov_2011,condon_nonlinear_2005} for ensuring the existence of the Gramian $P$; see \cite{damm_direct_2008,FasGP25}. The rescaling 
 $u\to\frac{1}{\gamma}u$, $0<\gamma<1$, also enhances the convergence of the fixed-point iterations in \Cref{alg:T_BQO_TSIA} and \cite[Algorithm 6.1]{FasGP26}. We use $\gamma=0.1$.

The examples and algorithms are implemented in MATLAB where we use the solvers from the MESS library to solve Sylvester equations and the standard and generalized Lyapunov equations. The methods employed are a modified version of
\texttt{mess\_sylvester\_sparse\_dense} with default settings,
\texttt{mess\_lyap} with default settings, and  \texttt{mess\_lyapunov\_bilinear} with the options  
 residual norm tolerance 
\texttt{opts.blyap.res\_tol} $ =10^{-8}$, 
relative difference tolerance 
\texttt{opts.blyap.rel\_diff\_tol} $ =10^{-7}$, and 
 maximum iterations 
\texttt{opts.blyap.maxiter} $ =50$ \cite{Mess}.
The code for the numerical examples is available on ZENODO (\url{https://doi.org/10.5281/zenodo.23692641}).
Computations were run on an Intel(R) Core(TM) i7-1255U CPU @ 1.70 GHz with 16 GB RAM using MATLAB 2025b.

\begin{figure}[h]
\centering
    \includegraphics[width=0.6\textwidth]{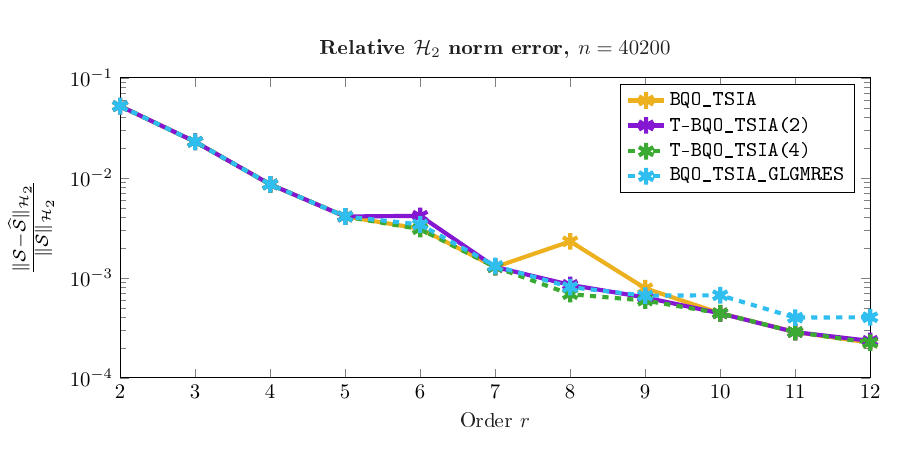}
    \caption{RC example, Relative $\sH_2$ norm errors.}
    \label{fig:1}
\end{figure}

First, we study a nonlinear RC circuit example, originally from \cite{bai_projection_2006,benner_lyapunov_2011}, which is bilinearized via Carleman bilinearization \cite{rugh_nonlinear_1981}. In \cite{FasGP26}, we chose an alternative output to build a BQO system from this example. For more details, see \cite{FasGP26}. 
\Cref{fig:1} shows the relative $\sH_2$ norm error between the original and the reduced BQO systems for different reduced orders. 
Here, we observe that the studied methods produce similar results for smaller orders but yield small differences for higher orders. More precisely, the method \texttt{BQO\_TSIA\_GLGMRES} provides slightly worse results here. Also there is a convergence issue for $r=8$, where \texttt{BQO\_TSIA} does not converge to a global optimal model. This problem could be tackled by a different initialization of \Cref{alg:T_BQO_TSIA}; see \cite{FasGP26}. Nevertheless, we retain our initialization chosen in \cite{FasGP26}.
More importantly, we observe that the truncated methods $\texttt{T-BQO\_TSIA(K)}$ match the accuracy of the non-polynomial methods, at a much lower computational cost. 

\begin{table}[htbp]
\footnotesize
\caption{RC example, Timings in sec}\label{tab:1}
\begin{center}
  \begin{tabular}{c|cccccc} 
    Reduced order $r$ & 2 & 4 & 6 & 8 & 10 & 12 \\
    \hline 
    \texttt{BQO\_TSIA} & 11.68 & 20.34 & 87.13 & 60.62 & 62.26 & 57.49 \\
    \texttt{T-BQO\_TSIA(2)} & 4.85 &8.18 &14.77 &23.47&22.68&22.09\\
    \texttt{T-BQO\_TSIA(4)} &9.56&15.72&126.33 &63.11&42.74&40.04 \\
    \texttt{BQO\_TSIA\_GLGMRES}&0.52&1.01&10.73 &4.07&35.99&4.67 
  \end{tabular}
\end{center}
\end{table}

Next, we take a look at the timings in \Cref{tab:1} for the RC example and observe that the truncated variants offer a speedup for the original \texttt{BQO\_TSIA} in most cases. Nevertheless, the method \texttt{BQO\_TSIA\_GLGMRES} is still faster than the truncated variants \texttt{T-BQO\_TSIA(K)}, $K=2,4$, except for $r=10$. In this case, there is again a convergence issue which can be improved by a different choice of an initial reduced BQO system.

\begin{figure}[h]
\centering
    \includegraphics[width=0.62\textwidth]{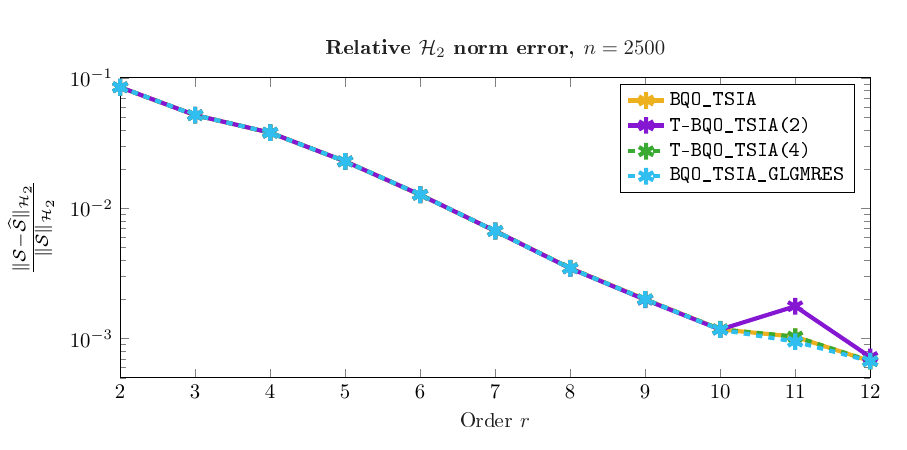}
    \caption{Heat example, Relative $\sH_2$ norm errors.}
    \label{fig:2}
\end{figure}

As a second example, we test the proposed method on a heat equation example from \cite{benner_lyapunov_2011} where we modified the output and applied decaying inputs, according to \cite{FasGP26}, to construct a BQO system. In \Cref{fig:2}, we display the relative $\mathcal{H}_2$ norm error for increasing reduced orders. For this example, all methods yield very similar results. This shows the good approximation quality of the truncated algorithms while being computationally more efficient than the original methods.
There is only a visible difference for $r=11$ where \texttt{T-BQO\_TSIA(2)} cannot compete with the other variants and yields a slightly worse error. As we see for \texttt{T-BQO\_TSIA(4)}, the increase of $K$ in \Cref{alg:T_BQO_TSIA} can lead to better results, but also leads to longer computation times. We can observe this in \Cref{tab:2} where the timings for the methods applied to the heat example are presented. The truncated variants of \texttt{BQO\_TSIA}, namely \texttt{T-BQO\_TSIA(2)} and \texttt{T-BQO\_TSIA(4)}, offer a speedup compared to the full \texttt{BQO\_TSIA}. The method \texttt{T-BQO\_TSIA(2)} is even faster than \texttt{BQO\_TSIA\_GLGMRES}.

\begin{table}[htbp]
\footnotesize
\caption{Heat example, Timings in sec}\label{tab:2}
\begin{center}
  \begin{tabular}{c|cccccc} 
    Reduced order $r$ & 2 & 4 & 6 & 8 & 10 & 12 \\
    \hline 
    \texttt{BQO\_TSIA} & 0.9 & 0.77 & 1.76 & 1.4 & 2.65 & 2.29 \\
    \texttt{T-BQO\_TSIA(2)} & 0.31 &0.28 &0.59 &0.54&0.83&0.71\\
    \texttt{T-BQO\_TSIA(4)} &0.64&0.51&1.12 &0.89&2.74&1.46 \\
    \texttt{BQO\_TSIA\_GLGMRES}&0.53&0.4&0.81 &0.67&4.5&1.27 
  \end{tabular}
\end{center}
\end{table}

\section{Conclusion}
\color{black}
We developed an $\sH_2$ optimal model reduction algorithm for approximating polynomial BQO systems. We introduced an $\sH_2$ norm and  derived an $\sH_2$ error expression for these systems which allowed us to derive the first-order optimality conditions. We showed that these truncated $\sH_2$  optimality conditions are consistent with the original full $\sH_2$ optimality conditions. Based on this theoretical analysis, we formulated a truncated $\sH_2$ optimal algorithm and tested it on two numerical examples. We observed that our truncated algorithm is a competing alternative to the full $\mathcal{H}_2$ optimal algorithm. While requiring to solve only regular Sylvester equations, the proposed algorithm matches the accuracy of the untruncated version even for small truncation indices.

\section{Disclosure statement} 

We wish to confirm that there are no known conflicts of interest associated with this publication and there has been no significant financial support for this work that could have influenced its outcome. Part of this work was completed during a research stay of the third author at Virginia Tech supported by a fellowship of the German Academic Exchange Service (DAAD). The work of Gugercin was supported in part by the National Science Foundation (NSF), United States under Grant No. CMMI-2130695.

We confirm that the manuscript has been read and approved by all named authors and that there are no other persons who satisfied the criteria for authorship but are not listed. We further confirm that the order of authors listed in the manuscript has been approved by all of us.

We confirm that we have given due consideration to the protection of intellectual property associated with this work and that there are no impediments to publication, including the timing of publication, with respect to intellectual property. In so doing we confirm that we have followed the regulations of our institutions concerning intellectual property.

We understand that the Corresponding Author is the sole contact for the Editorial process. He is responsible for communicating with the other authors about progress, submissions of revisions and final approval of proofs.

\printcredits

%% Loading bibliography style file
\bibliographystyle{model1-num-names}
% \bibliographystyle{cas-model2-names}

% Loading bibliography database
\bibliography{bib_H2bqo}

\end{document}